\newtheorem{theorem}{Theorem}[section]
\newtheorem{lemma}[theorem]{Lemma}
\newtheorem{corollary}[theorem]{Corollary}
\newtheorem{exAux}[theorem]{Example}
\newtheorem{Def}[theorem]{Definition}
\newenvironment{definition}{\begin{Def} \rm}{\end{Def}}
\newtheorem{Note}[theorem]{Note}
\newtheorem{Problem}[theorem]{Problem}
\newtheorem{Rem}[theorem]{Remark}
\newtheorem{Not}[theorem]{Notation}
\newtheorem{Conj}[theorem]{Conjecture}
\newtheorem{Ass}[theorem]{Assumption}
\newenvironment{proof}{\medskip\noindent{\bf Proof.\ }}{\qed\medskip}
\newenvironment{proofof}[1]{\medskip\noindent{\bf Proof  of {#1}.\ 
}}{\qed\medskip}
\newcommand{\qed}{\hfill\mbox{$\Box$\qquad\qquad}}
\renewcommand{\th}{\theta}
\newcommand{\R}{\mathbb{R}}
\newcommand{\matR}{\text{\rm Mat}_{d+1}(\mathbb{R})}
\newcommand{\matX}{\text{\rm Mat}_X(\mathbb{R})}
\newcommand{\hx}{\hat{x}}
\newcommand{\bb}{\text{\bf 1}}
\renewcommand{\indent}{\hspace{6mm}}
\begin{document}

\thispagestyle{empty}

\begin{center}
\LARGE \bf
\noindent
Tridiagonal matrices with nonnegative entries
\end{center}

\smallskip

\begin{center}
\Large
Kazumasa Nomura and Paul Terwilliger
\end{center}

\smallskip

\begin{quote}
\small 
\begin{center}
\bf Abstract
\end{center}
\indent
In this paper we characterize the nonnegative irreducible tridiagonal matrices and 
their permutations, using certain entries in their primitive idempotents.
Our main result is summarized as follows. 
Let $d$ denote a nonnegative integer.
Let $A$ denote a matrix in $\matR$ and let $\{\th_i\}_{i=0}^d$
denote the roots of the characteristic polynomial of $A$.
We say $A$ is {\em multiplicity-free} whenever these 
roots are mutually distinct and contained in $\R$.
In this case $E_i$ will denote the primitive idempotent of $A$
associated with $\th_i$ $(0 \leq i \leq d)$.
We say $A$ is {\em symmetrizable} whenever there exists an invertible
diagonal matrix $\Delta \in \matR$ such that $\Delta A \Delta^{-1}$ 
is symmetric.
Let $\Gamma(A)$ denote the directed graph with vertex set $\{0,1,\ldots,d\}$,
where $i \rightarrow j$ whenever $i \neq j$ and $A_{ij} \neq 0$.

\noindent
{\bf Theorem}.
{\em
Assume that each entry of $A$ is nonnegative. 
Then the following are equivalent for $0 \leq s,t \leq d$.
\begin{itemize}
\item[\rm (i)]
The graph $\Gamma(A)$ is a bidirected path with endpoints
$s$, $t$:
\[
 s \leftrightarrow * \leftrightarrow * \leftrightarrow \cdots
 \leftrightarrow * \leftrightarrow t.
\]
\item[\rm (ii)]
The matrix $A$ is symmetrizable and multiplicity-free.
Moreover the $(s,t)$-entry of $E_i$ times
\[
 (\th_i-\th_0)\cdots(\th_i-\th_{i-1})
 (\th_i-\th_{i+1})\cdots(\th_i-\th_d)
\]
is independent of $i$ for $0 \leq i \leq d$, and this
common value is nonzero.
\end{itemize}
}

Recently Kurihara and Nozaki obtained a theorem that characterizes the 
$Q$-polynomial property for symmetric association schemes. 
We view the above result as a linear algebraic generalization
of their theorem.
\end{quote}

\section{Introduction}

\indent
Recently Kurihara and Nozaki gave the following characterization
of the $Q$-polynomial property for symmetric association schemes
(see Section \ref{sec:assoc} for definitions).

\medskip

\begin{theorem} {\rm \cite[Theorem 1.1]{KN}} \label{thm:Q} \samepage
Let ${\cal X}$ denote a $d$-class symmetric association scheme
with adjacency matrices $\{A_i\}_{i=0}^d$.
Let $E$ and $F$ denote primitive idempotents of $\cal X$ with $E$ nontrivial.
For $0 \leq i \leq d$ let $\th^*_i$ denote the dual eigenvalue
of $E$ for $A_i$.
Then the following are equivalent.
\begin{itemize}
\item[\rm (i)]
$\cal X$ is $Q$-polynomial relative to $E$, and $F$ is the
last primitive idempotent in this $Q$-polynomial structure. 
\item[\rm (ii)]
$\{\th^*_i\}_{i=0}^d$ are mutually distinct, and
for $0 \leq i \leq d$ the eigenvalue of $A_i$ for $F$ is
\begin{equation}    \label{eq:pid}
    \frac{(\th^*_0-\th^*_1)(\th^*_0-\th^*_2)\cdots(\th^*_0-\th^*_d)}
         {(\th^*_i-\th^*_0)\cdots(\th^*_i-\th^*_{i-1})
          (\th^*_i-\th^*_{i+1})\cdots(\th^*_i-\th^*_d)}.
\end{equation}
\end{itemize}
\end{theorem}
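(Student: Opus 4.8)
\medskip\noindent\textbf{Plan of proof of Theorem~\ref{thm:Q}.}

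I would deduce Theorem~\ref{thm:Q} from the main result of this paper (the Theorem stated in the abstract). Let $\mathcal X$ be as in Theorem~\ref{thm:Q}, with Bose--Mesner algebra $M$, vertex set $X$, and $n = |X|$; order the primitive idempotents of $\mathcal X$ as $E_0 = n^{-1}J,\ E_1 = E,\ E_2,\ldots,E_d$ with the last $d-1$ arbitrary, write $F = E_t$ and $m_j = \operatorname{rank}E_j$, and let $p_j(l),\,q_j(l)$ denote the eigenmatrix entries, so $A_j = \sum_l p_j(l) E_l$, $n E_j = \sum_l q_j(l) A_l$, and $\th^*_l = q_1(l)$. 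Consider the linear operator $Y \mapsto n\,(E \circ Y)$ on $M$ and let ${\cal B} \in \matR$ be its matrix in the basis $\{E_j\}_{j=0}^d$; thus $({\cal B})_{ij}$ is the Krein parameter $q^i_{1j}$ defined by $n\,E \circ E_j = \sum_i q^i_{1j} E_i$. Since $\mathcal X$ is symmetric, the Krein condition gives $q^i_{1j} \ge 0$, so $\cal B$ has nonnegative entries, as the abstract Theorem requires. In the basis $\{A_j\}_{j=0}^d$ the same operator is represented by $\operatorname{diag}(\th^*_0,\ldots,\th^*_d)$ because $n\,(E \circ A_j) = \th^*_j A_j$; hence the roots of the characteristic polynomial of $\cal B$ are exactly $\th^*_0,\ldots,\th^*_d$, so they are real, and they are mutually distinct precisely when the $\th^*_j$ are. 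Finally the standard Krein symmetry $m_i q^i_{1j} = m_j q^j_{1i}$ shows that $\Delta {\cal B}\Delta^{-1}$ is symmetric for $\Delta = \operatorname{diag}(\sqrt{m_0},\ldots,\sqrt{m_d})$, so $\cal B$ is symmetrizable. (That $E$ is nontrivial rules out the degenerate case ${\cal B} = I$.)

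Next I would build the dictionary between $\Gamma({\cal B})$ and the $Q$-polynomial property. From $n\,(E \circ E_0) = E$ one reads off $q^i_{10} = \delta_{i1}$, and then the Krein symmetry gives $q^0_{1j} = m_1\delta_{j1}$; so in $\Gamma({\cal B})$ the vertex $0$ is joined, in both directions, only to the vertex $1$, and is therefore forced to be an endpoint of any directed path that $\Gamma({\cal B})$ might be. The same symmetry makes $\Gamma({\cal B})$ automatically bidirected. With $s = 0$ and $t$ the index of $F$, the statement ``$\Gamma({\cal B})$ is a bidirected path with endpoints $s,t$'' then says precisely that the primitive idempotents can be reordered, with the trivial idempotent $E_0$ at one end and $F$ at the other, so that $\cal B$ becomes irreducible tridiagonal; equivalently, that $\mathcal X$ is $Q$-polynomial relative to $E$ with $F$ the last idempotent of that ordering. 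So part (i) of the abstract Theorem, applied to $\cal B$ with this $s$ and $t$, coincides with part (i) of Theorem~\ref{thm:Q}.

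It remains to match the numerical conditions. Let $\E_i$ denote the primitive idempotent of $\cal B$ for the eigenvalue $\th^*_i$; in the basis $\{A_j\}$ it is the projection onto the span of $A_i$, so from $E_t = n^{-1}\sum_l q_t(l) A_l$ and $A_i = \sum_l p_i(l) E_l$ we obtain
\[
 (\E_i)_{st} \;=\; \frac{1}{n}\,q_t(i)\,p_i(0) \;=\; \frac{m_t}{n}\,\sigma_i,
\]
where $\sigma_i$ is the eigenvalue of $A_i$ for $F$, using $p_i(0) = k_i$ (the valency of $A_i$) and the duality $k_i q_t(i) = m_t \sigma_i$. (If the convention for the matrix of an operator introduces a transpose here, it only rescales by the $i$-independent factor $m_t$, which is harmless.) Inserting this into part (ii) of the abstract Theorem: $(\E_i)_{st}\prod_{j \neq i}(\th^*_i - \th^*_j)$ is a nonzero constant in $i$ iff the $\th^*_i$ are distinct and $\sigma_i \prod_{j \neq i}(\th^*_i - \th^*_j)$ is a nonzero constant in $i$; evaluating at $i = 0$ and using $\sigma_0 = 1$ (as $A_0 = I$) pins that constant to $\prod_{j=1}^d(\th^*_0 - \th^*_j)$, which is formula \eqref{eq:pid}. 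Hence part (ii) of the abstract Theorem becomes part (ii) of Theorem~\ref{thm:Q}, and the abstract Theorem completes the proof.

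The step I expect to cost the most care is the bookkeeping in the middle paragraph: making ``$\Gamma({\cal B})$ is a bidirected path with the prescribed endpoints'' match ``$Q$-polynomial relative to $E$ with $F$ last'' exactly, in particular unwinding the automatic nonvanishing of the off-diagonal Krein parameters along the path. Once that dictionary is in place, the abstract Theorem supplies all the substance.
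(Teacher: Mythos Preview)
Your proof is correct and follows the same core strategy as the paper: apply Theorem~\ref{thm:mainsym} to the $(d{+}1)\times(d{+}1)$ matrix of Krein parameters $(q^i_{1j})_{i,j}$, use $q^i_{10}=\delta_{i1}$ to pin vertex $0$ as an endpoint and thereby match condition~(i) with the $Q$-polynomial property, and compute the relevant idempotent entry to recover~\eqref{eq:pid}.

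The packaging differs. The paper realizes your matrix ${\cal B}$ as $\rho^*(A^*_1)$, where $A^*_1$ is the dual adjacency matrix and $\rho^*$ is the representation on the primary module of the subconstituent algebra (Definition~\ref{def:rhos}, Lemma~\ref{lem:rhoAi}(ii)); the primitive idempotents of ${\cal B}$ are then $\rho^*(E^*_i)$, and Lemma~\ref{lem:rhoEi}(ii) gives the $(s,0)$-entry directly as $|X|^{-1}P_{si}$ (with $F=E_s$), so no appeal to the duality $k_iQ_{it}=m_tP_{ti}$ is needed. Your Schur-product framing on the Bose--Mesner algebra is more elementary and avoids introducing $T$, $A^*_i$, $E^*_i$ altogether; the paper's framing costs that setup but yields the cleaner entry formula and makes the parallelism with Theorem~\ref{thm:P} structurally transparent via the twin maps $\rho,\rho^*$. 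Your choice $(s,t)=(0,t)$ is the transpose of the paper's $(s,0)$; as you note, this only introduces the $i$-independent factor $m_t$ and is harmless.
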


\medskip

As suggested by \cite{KN}, there is a ``dual'' version of Theorem \ref{thm:Q}
in which the $Q$-polynomial structure is replaced by a $P$-polynomial structure.
We now state this dual version.

\medskip

\begin{theorem}     \label{thm:P}    \samepage
Let ${\cal X}$ denote a $d$-class symmetric association scheme
with primitive idempotents $\{E_i\}_{i=0}^d$.
Let $B$ and $C$ denote adjacency matrices of $\cal X$ with $B$ nontrivial.
For $0 \leq i \leq d$ let $\th_i$ denote the eigenvalue of $B$ for $E_i$.
Then the following are equivalent.
\begin{itemize}
\item[\rm (i)]
$\cal X$ is $P$-polynomial relative to $B$, and $C$ is the
last adjacency matrix in this $P$-polynomial structure.
\item[(ii)]
$\{\th_i\}_{i=0}^d$ are mutually distinct, and
for $0 \leq i \leq d$ the dual eigenvalue of $E_i$ for $C$ is
\begin{equation}    \label{eq:qid}
    \frac{(\th_0-\th_1)(\th_0-\th_2)\cdots(\th_0-\th_d)}
         {(\th_i-\th_0)\cdots(\th_i-\th_{i-1})
          (\th_i-\th_{i+1})\cdots(\th_i-\th_d)}.
\end{equation}
\end{itemize}
\end{theorem}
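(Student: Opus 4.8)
\medskip
\noindent\textbf{Proof proposal.}\quad
The plan is to deduce Theorem~\ref{thm:P} from the main theorem (the displayed Theorem in the Abstract) by applying the latter to the intersection matrix of $B$. Write $C=A_r$, let $M$ denote the Bose--Mesner algebra of $\mathcal X$, put $n=|\mathcal X|$, let $k_i$ be the valency of $A_i$, and let $Q$ be the second eigenmatrix, so that $E_j=n^{-1}\sum_i Q_{ij}A_i$ and $Q_{ij}$ is the dual eigenvalue of $E_j$ for $A_i$. Let $L\in\mathrm{Mat}_{d+1}(\R)$ be the matrix of the map $X\mapsto BX$ on $M$ relative to the basis $\{A_i\}_{i=0}^d$, so that $BA_j=\sum_i L_{ij}A_i$; each entry $L_{ij}$ is an intersection number, hence a nonnegative integer. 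First I would invoke the standard fact that $\mathcal X$ is $P$-polynomial relative to $B$ exactly when, for a suitable reordering of $\{A_i\}$, the matrix $L$ is irreducible tridiagonal; moreover $A_0=I$ is then forced to one end of the resulting path, the $P$-polynomial ordering is uniquely determined by $B$, and ``$C$ is the last adjacency matrix in this structure'' means precisely that $C$ occupies the opposite end. Thus statement~(i) of Theorem~\ref{thm:P} is equivalent to: $\Gamma(L)$ is a bidirected path with endpoints $0$ (the index of $I$) and $r$ (the index of $C$).

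Next I would record the features of $L$ needed to feed the main theorem. Its entries are nonnegative reals. It is symmetrizable: the identity $k_iL_{ij}=k_jL_{ji}$ for symmetric schemes shows that $\Delta L\Delta^{-1}$ is symmetric for $\Delta=\mathrm{diag}(\sqrt{k_0},\dots,\sqrt{k_d})$. The roots of the characteristic polynomial of $L$ are the eigenvalues of $B$, namely $\{\th_i\}_{i=0}^d$, and these lie in $\R$ since $B$ is symmetric; hence $L$ is multiplicity-free precisely when the $\th_i$ are mutually distinct. When the $\th_i$ are distinct, $X\mapsto BX$ has eigenvalue $\th_i$ on the line $\R E_i$, so the primitive idempotent of $L$ associated with $\th_i$ is the matrix, in the basis $\{A_j\}$, of the projection $\pi_i\colon M\to\R E_i$. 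Since $\pi_i(A_0)=\pi_i(I)=\pi_i(\sum_j E_j)=E_i=n^{-1}\sum_j Q_{ji}A_j$, reading off the $A_r$-coordinate gives the key identity: the $(r,0)$-entry of that idempotent equals $Q_{ri}/n$, i.e.\ $1/n$ times the dual eigenvalue of $E_i$ for $C$.

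Now I would apply the main theorem with $A:=L$, $s:=r$, $t:=0$. Combined with the preceding two paragraphs, its equivalence of (i) and (ii) reads: $\mathcal X$ is $P$-polynomial relative to $B$ with $C$ the last adjacency matrix if and only if the $\th_i$ are mutually distinct and the product of $Q_{ri}/n$ with $(\th_i-\th_0)\cdots(\th_i-\th_{i-1})(\th_i-\th_{i+1})\cdots(\th_i-\th_d)$ is independent of $i$, with nonzero common value. To finish I would evaluate this common value at $i=0$: because $E_0=n^{-1}J$ we have $Q_{r0}=1$, so the common value equals $n^{-1}(\th_0-\th_1)(\th_0-\th_2)\cdots(\th_0-\th_d)$, which is nonzero since the $\th_i$ are distinct. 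Solving for $Q_{ri}$ turns the displayed condition into exactly \eqref{eq:qid}; as $Q_{ri}$ is the dual eigenvalue of $E_i$ for $C$, this is statement~(ii). (Alternatively, Theorem~\ref{thm:P} is the formal dual of Theorem~\ref{thm:Q} under the exchange of the ordinary and Hadamard products, of the $A_i$ with the $E_i$, and of eigenvalues with dual eigenvalues; this exchange sends the Bose--Mesner algebra of $\mathcal X$ to an abstract symmetric association scheme, so one could instead verify that the proof in \cite{KN} uses only this abstract structure and apply it there.)

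I expect the main obstacle to be the very first step: proving carefully that ``$P$-polynomial relative to $B$ with $C$ last'' is equivalent to ``$\Gamma(L)$ is a bidirected path with endpoints $0$ and $r$''. This breaks into (a) using the three-term recurrence, together with the identity $k_iL_{ij}=k_jL_{ji}$ to descend from Hessenberg to tridiagonal, to show that $P$-polynomiality forces $L$ into irreducible tridiagonal form; (b) conversely producing, from any irreducible tridiagonal form of $L$, the polynomials $v_j$ of exact degree $j$ with $A_j=v_j(B)$; (c) noting that the row and column of $L$ indexed by $I$ each contain a single nonzero entry, which pins $I$ to an endpoint; and (d) checking that the $P$-polynomial ordering is unique given $B$, so that ``last adjacency matrix'' is well defined and equals the opposite endpoint. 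A secondary point requiring care is the bookkeeping of the constants $n$ and $k_i$, so that the common value emerges as \eqref{eq:qid} on the nose rather than up to a scalar.
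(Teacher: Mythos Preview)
Your proposal is correct and follows essentially the same route as the paper: the paper's matrix $\rho(B)$ is precisely your intersection matrix $L$ (the paper packages this via the primary module of the subconstituent algebra rather than $M$ itself, but the representation is identical), and it likewise applies Theorem~\ref{thm:mainsym} to this matrix with $(s,t)=(r,0)$, computing the $(r,0)$-entry of the $i$th idempotent as $|X|^{-1}Q_{ri}$ and evaluating the common value at $i=0$ using $Q_{r0}=1$. Your anticipated ``main obstacle'' is handled in the paper by Lemma~\ref{lem:reltoAi} (via Lemmas~\ref{lem:Ppoly} and~\ref{lem:GrhoAi}), exactly along the lines you sketch in (a)--(d).
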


\medskip

In this paper we show that Theorems \ref{thm:Q} and \ref{thm:P} 
follow from a linear algebraic result concerning matrices with nonnegative 
entries.
We give two versions of the result, which are Theorems \ref{thm:mainsym}
and \ref{thm:main} below. Theorem \ref{thm:main} is the general version, 
and Theorem \ref{thm:mainsym} is about an attractive special case. 
Before presenting these theorems we recall some concepts from linear
algebra.

\medskip

Throughout the paper $\R$ denotes the field of real numbers,
$d$ denotes a nonnegative integer, and
$\matR$ denotes the $\R$-algebra consisting of the $(d+1)\times(d+1)$ 
matrices that have all entries in $\R$.
We index the rows and columns by $0,1,\ldots,d$.
Let $V=\R^{d+1}$ denote the vector space over $\R$ consisting of the
$(d+1) \times 1$ matrices that have all entries in $\R$.
We index the rows by $0,1,\ldots, d$.
Observe that $\matR$ acts on $V$ by left multiplication.

Let $A$ denote a matrix in $\matR$.
We say $A$ is {\em nonnegative} whenever each entry of $A$ is nonnegative.
We say $A$ is {\em symmetric} whenever $A^t=A$, where $t$ denotes
transpose.
We say $A$ is {\em symmetrizable} whenever there exists an invertible
diagonal matrix $\Delta \in \matR$ such that $\Delta A \Delta^{-1}$ is symmetric.
A subspace $W \subseteq V$ is called an {\em eigenspace} of $A$ whenever 
$W \neq 0$ and there exists $\th \in \R$ such that
$W=\{v \in V \,|\, Av = \th v\}$;
in this case $\th$ is the {\em eigenvalue} of $A$ associated with $W$.
We say $A$ is {\em diagonalizable} whenever its eigenspaces span $V$.
We say that $A$ is {\em multiplicity-free} whenever
$A$ is diagonalizable and its eigenspaces all have dimension $1$.
Assume $A$ is multiplicity-free and let $\{\th_i\}_{i=0}^d$ denote
an ordering of the eigenvalues of $A$.
For $0 \leq i \leq d$ let $V_i$ denote 
the eigenspace of $A$ associated with $\th_i$.
For $0 \leq i \leq d$ define $E_i \in \matR$ such that
$(E_i-I)V_i=0$ and $E_iV_j=0$ for $j \neq i$ $(0 \leq j \leq d)$.
Here $I$ denotes the identity matrix in $\matR$.
We call $E_i$ the {\em primitive idempotent} of $A$ associated with $V_i$
(or $\th_i$). 
Observe that
(i) $I=\sum_{i=0}^d E_i$;
(ii) $E_iE_j=\delta_{i,j}E_i$ $(0 \leq i,j \leq d)$;
(iii) $V_i=E_iV$ $(0 \leq i \leq d)$;
(iv) $A=\sum_{i=0}^d \th_i E_i$.
Using these facts we find
\begin{align}    \label{eq:defEi}
 E_i &= \prod_{\stackrel{0 \leq j \leq d}{j \neq i}}
        \frac{A-\th_jI}
             {\th_i-\th_j}
     & & (0 \leq i \leq d).
\end{align}

Again let $A$ denote a matrix in $\matR$.
We say that $A$ is {\em tridiagonal} whenever each nonzero entry 
lies on either the diagonal, the subdiagonal, or the superdiagonal.
Assume for the moment that $A$ is tridiagonal.
Then $A$ is said to be {\em irreducible} whenever each entry on the 
subdiagonal is nonzero and each entry on the superdiagonal is nonzero.
Let $\Gamma(A)$ denote the directed graph with 
vertex set $\{0,1,\ldots,d\}$, where $i \rightarrow j$ whenever 
$i \neq j$ and $A_{ij} \neq 0$.
Observe that the following are equivalent:
(i) $A$ is irreducible tridiagonal;
(ii) $\Gamma(A)$ is the bidirected path
$0 \leftrightarrow 1 \leftrightarrow 2 \leftrightarrow \cdots
  \leftrightarrow d$.
More generally the following are equivalent:
(i) there exists a permutation matrix $\Lambda \in \matR$ such that
 $\Lambda A\Lambda^{-1}$ is irreducible tridiagonal;
(ii) $\Gamma(A)$ is a bidirected path.
We now state our first main result.

\medskip

\begin{theorem}    \label{thm:mainsym}    \samepage
Let $A$ denote a nonnegative matrix in $\matR$.
Then the following are equivalent for $0 \leq s,t \leq d$.
\begin{itemize}
\item[\rm (i)]
The graph $\Gamma(A)$ is a bidirected path with endpoints
$s$, $t$:
\[
 s \leftrightarrow * \leftrightarrow * \leftrightarrow \cdots
 \leftrightarrow * \leftrightarrow t.
\]
\item[\rm (ii)]
The matrix $A$ is symmetrizable and multiplicity-free.
Moreover the $(s,t)$-entry of $E_i$ times
\[
   (\th_i-\th_0)\cdots(\th_i-\th_{i-1})
   (\th_i-\th_{i+1})\cdots(\th_i-\th_d)
\]
is independent of $i$ for $0 \leq i \leq d$, 
and this common value is nonzero.
\end{itemize}
\end{theorem}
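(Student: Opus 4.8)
The plan is to prove (i) $\Rightarrow$ (ii) and (ii) $\Rightarrow$ (i) separately, exploiting throughout that conjugation of $A$ by a permutation matrix $\Lambda$ (realizing a permutation $\sigma$ of $\{0,1,\dots,d\}$) preserves nonnegativity, symmetrizability, multiplicity-freeness and the eigenvalue multiset $\{\th_i\}_{i=0}^d$, sends $\Gamma(A)$ to its relabelling by $\sigma$, and identifies $(E_i)_{s,t}$ with the $(\sigma(s),\sigma(t))$-entry of the corresponding primitive idempotent of $\Lambda A\Lambda^{-1}$. Using the equivalences recorded just before the theorem, for (i) $\Rightarrow$ (ii) I may therefore assume $A$ is irreducible tridiagonal with $s=0$ and $t=d$; for (ii) $\Rightarrow$ (i) I will instead first extract combinatorial information and then relabel.

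For (i) $\Rightarrow$ (ii): since $A$ is nonnegative and irreducible tridiagonal, every sub- and superdiagonal entry is positive, so the diagonal matrix $\Delta$ with $\Delta_{00}=1$ and $\Delta_{ii}/\Delta_{i-1,i-1}=\sqrt{A_{i-1,i}/A_{i,i-1}}$ conjugates $A$ to the symmetric tridiagonal matrix whose off-diagonal entries are $\sqrt{A_{i-1,i}A_{i,i-1}}>0$; hence $A$ is symmetrizable. That symmetric tridiagonal matrix is diagonalizable over $\R$, and the three-term recurrence satisfied by any eigenvector (with the off-diagonal entries nonzero) shows each eigenvector is determined by its $0$-component, so each eigenspace is one-dimensional and $A$ is multiplicity-free. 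Finally \eqref{eq:defEi} gives $E_i=p_i(A)/p_i(\th_i)$ with $p_i(x)=\prod_{j\neq i}(x-\th_j)$ monic of degree $d$; since $(A^k)_{0,d}=0$ for $k<d$ (no walk of length $<d$ joins $0$ to $d$ in the path $\Gamma(A)$), we get $(E_i)_{s,t}\prod_{j\neq i}(\th_i-\th_j)=(p_i(A))_{0,d}=(A^d)_{0,d}=A_{0,1}A_{1,2}\cdots A_{d-1,d}$, which is positive and independent of $i$.

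For (ii) $\Rightarrow$ (i): write $p_i(x)=\prod_{j\neq i}(x-\th_j)$, so $E_i=p_i(A)/p_i(\th_i)$ and the hypothesis says $(p_i(A))_{s,t}=c$ for a nonzero constant $c$ independent of $i$. As the $\th_i$ are distinct, $\{p_i\}_{i=0}^d$ is a basis of the polynomials of degree $\le d$, and $x^k=\sum_i (\th_i^k/p_i(\th_i))\,p_i(x)$ for $0\le k\le d$; applying the linear map $f\mapsto(f(A))_{s,t}$ yields $(A^k)_{s,t}=c\sum_i \th_i^k/p_i(\th_i)$, and comparing coefficients of $x^d$ in the expansion of $x^k$ shows $\sum_i \th_i^k/p_i(\th_i)=\delta_{k,d}$, so $(A^k)_{s,t}=0$ for $0\le k\le d-1$ while $(A^d)_{s,t}=c\neq 0$. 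Now nonnegativity enters: from $(A^d)_{s,t}\neq 0$ there is a sequence $s=v_0,v_1,\dots,v_d=t$ with $A_{v_j,v_{j+1}}\neq 0$ for all $j$, and if some vertex repeated we could delete a segment to obtain such a sequence of length $<d$, contradicting $(A^k)_{s,t}=0$ for $k<d$; hence the $v_j$ are distinct, forming a Hamiltonian path, and I relabel so that $v_j=j$, $s=0$, $t=d$. Symmetrizability forces $\Delta_{ii}^2A_{ij}=\Delta_{jj}^2A_{ji}$, so $A_{ij}=0\iff A_{ji}=0$; thus $\Gamma(A)$ is bidirected with $i\leftrightarrow i+1$ for all $i$. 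Lastly, a nonzero entry $A_{ab}$ with $b\ge a+2$ would give the positive-weight walk $0\to 1\to\cdots\to a\to b\to b+1\to\cdots\to d$ of length $a+1+(d-b)\le d-1$, again impossible; so $\Gamma(A)$ has no other edges and is the bidirected path with endpoints $0$ and $d$, i.e.\ with endpoints $s$ and $t$ before relabelling.

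I expect the crux to be the passage in (ii) $\Rightarrow$ (i) from the hypothesis on primitive idempotents to the combinatorial conditions $(A^k)_{s,t}=0$ for $k<d$ and $(A^d)_{s,t}\neq 0$; this rests on the basis property of $\{p_i\}$ together with the identity $\sum_i\th_i^k/p_i(\th_i)=\delta_{k,d}$ for $0\le k\le d$. Once that is in hand the remaining work is combinatorial, and the only point requiring care is that nonzero diagonal entries of $A$ (loops) not disturb the walk-length bookkeeping — they do not, since any ``shortcut'' edge still strictly shortens the walk.
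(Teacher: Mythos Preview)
Your proof is correct and follows essentially the same approach as the paper: both directions hinge on the identity $E_i=\prod_{j\ne i}(A-\th_jI)/(\th_i-\th_j)$ together with the equivalence between the walk-count conditions $(A^k)_{s,t}=0$ for $k<d$, $(A^d)_{s,t}\ne 0$ and the graph $\Gamma(A)$ being (up to symmetrizability) a bidirected path with endpoints $s,t$. The paper packages these steps into separate lemmas (and factors through the Hessenberg version, Theorem~\ref{thm:main}), whereas you argue inline and invoke the Lagrange identity $\sum_i\th_i^k/p_i(\th_i)=\delta_{k,d}$ in place of the paper's dimension count in Lemma~\ref{lem:Hpre}, but the substance is the same.
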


\medskip

The proof of Theorem \ref{thm:mainsym} is given in Section
\ref{sec:mainsymproof}.
Before stating our second main result, we make a few comments.
For $A \in \matR$ we say that $A$ is {\em (upper) Hessenberg} whenever 
each entry below the
subdiagonal is zero and each entry on the subdiagonal is nonzero.
Observe that the following are equivalent:
(i) $A$ is Hessenberg;
(ii) in the graph $\Gamma(A)$, for all vertices $i,j$ we have
$i \rightarrow j$ if $i-j=1$ and $i \not\rightarrow j$ if $i-j>1$.
An ordering $\{x_i\}_{i=0}^d$ of the vertices of $\Gamma(A)$ is called 
{\em Hessenberg} whenever for $0 \leq i,j\leq d$,
$x_i \rightarrow x_j$ if $i-j=1$ and $x_i \not\rightarrow x_j$ if $i-j>1$.
We recall the directed distance function $\partial$ for $\Gamma(A)$.
Given vertices $s,t$ of $\Gamma(A)$ and an integer $i$ $(0 \leq i \leq d)$,
we have $\partial(s,t)=i$ whenever there exists a directed path in $\Gamma(A)$
from $s$ to $t$ that has length $i$, and there does not exist a directed path 
in $\Gamma(A)$ from $s$ to $t$ that has length less than $i$.
For all vertices $s,t$ in $\Gamma(A)$ the following are equivalent:
(i) there exists a Hessenberg ordering $\{x_i\}_{i=0}^d$ of the vertices of 
$\Gamma(A)$ such that $x_0=t$ and $x_d=s$;
(ii) $\partial(s,t)=d$.
We now state our second main result.

\medskip

\begin{theorem}            \label{thm:main}     \samepage
Let $A$ denote a nonnegative matrix in $\matR$.
Then the following are equivalent for $0 \leq s,t \leq d$.
\begin{itemize}
\item[\rm (i)]
The matrix $A$ is diagonalizable, and $\partial(s,t)=d$ in $\Gamma(A)$.
\item[\rm (ii)]
The matrix $A$ is multiplicity-free.
Moreover the $(s,t)$-entry of $E_i$ times
\[
   (\th_i-\th_0)\cdots(\th_i-\th_{i-1})
   (\th_i-\th_{i+1})\cdots(\th_i-\th_d)
\]
is independent of $i$ for $0 \leq i \leq d$, 
and this common value is nonzero.
\end{itemize}
\end{theorem}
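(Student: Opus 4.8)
The plan is to reduce both implications to statements about the entries $(A^k)_{st}$, $0\le k\le d$. For mutually distinct $\th_0,\dots,\th_d$ write $p_i(x)=\prod_{j\ne i}(x-\th_j)$; this is monic of degree $d$, and by \eqref{eq:defEi} the quantity appearing in (ii) is exactly $(p_i(A))_{st}$. Two elementary facts will be used throughout. First, the interpolation identity
\[
 \sum_{i=0}^d \frac{\th_i^{k}}{\prod_{j\ne i}(\th_i-\th_j)}
 = \begin{cases} 0, & 0\le k\le d-1,\\ 1, & k=d,\end{cases}
\]
which is just the coefficient of $x^d$ in the Lagrange interpolant of $x\mapsto x^k$ at the nodes $\th_0,\dots,\th_d$ (equivalently, a one-line partial-fraction computation). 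Second, since $A$ is nonnegative, $(A^k)_{st}$ is a sum of nonnegative terms indexed by the directed walks of length $k$ from $s$ to $t$ in $\Gamma(A)$; hence $(A^k)_{st}=0$ precisely when there is no such walk, and --- because a shortest directed walk is a directed path --- the condition $\partial(s,t)=d$ is equivalent to $(A^k)_{st}=0$ for $0\le k\le d-1$ together with $(A^d)_{st}\ne 0$.

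For (i)$\Rightarrow$(ii): from $\partial(s,t)=d$ we get $(A^k)_{st}=0$ for $0\le k\le d-1$ and $(A^d)_{st}\ne 0$. Then $\mathbf{e}_t,A\mathbf{e}_t,\dots,A^d\mathbf{e}_t$ are linearly independent: in a nontrivial relation $\sum_k c_k A^k\mathbf{e}_t=0$ choose $m$ maximal with $c_m\ne 0$ and solve for $A^m\mathbf{e}_t$ in terms of the lower powers; then the scalar $\mathbf{e}_s^t A^{d-m}(A^m\mathbf{e}_t)$ equals $(A^d)_{st}$ on one hand and is a linear combination of the $(A^j)_{st}$ with $j\le d-1$ on the other, forcing $(A^d)_{st}=0$, a contradiction. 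Thus $\mathbf{e}_t$ is a cyclic vector, so $A$ is nonderogatory; together with the assumption that $A$ is diagonalizable this forces $A$ to be multiplicity-free (its minimal polynomial is squarefree and equals its characteristic polynomial). Finally $p_i(x)-x^d$ has degree at most $d-1$, so $(p_i(A))_{st}=(A^d)_{st}$ for every $i$; this common value is nonzero, indeed positive, because of the length-$d$ directed path from $s$ to $t$.

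For (ii)$\Rightarrow$(i): multiplicity-freeness already gives diagonalizability, so it remains to prove $\partial(s,t)=d$. Put $m_i=(E_i)_{st}$. From $A=\sum_i\th_i E_i$ we get $(A^k)_{st}=\sum_i\th_i^{k}m_i$ for all $k\ge 0$, while $(p_i(A))_{st}=p_i(\th_i)\,m_i=m_i\prod_{j\ne i}(\th_i-\th_j)$. The hypothesis says this last expression equals a constant $c\ne 0$ independent of $i$, so $m_i=c/\prod_{j\ne i}(\th_i-\th_j)$. Substituting into $(A^k)_{st}=\sum_i\th_i^{k}m_i$ and invoking the interpolation identity yields $(A^k)_{st}=0$ for $0\le k\le d-1$ and $(A^d)_{st}=c\ne 0$, which by the first paragraph is exactly $\partial(s,t)=d$.

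The only step requiring genuine thought is the multiplicity-free deduction inside (i)$\Rightarrow$(ii): one must extract from the purely combinatorial datum $\partial(s,t)=d$ (plus diagonalizability) that the eigenvalues are distinct, and the cyclic-vector argument is the crux. Everything else is bookkeeping with the interpolation identity and the walk interpretation of $(A^k)_{st}$, and the degenerate case $d=0$ is immediate. One could alternatively obtain nonderogatoriness from the Hessenberg ordering guaranteed by $\partial(s,t)=d$, by conjugating $A$ with a permutation matrix into upper Hessenberg form with nonzero subdiagonal, but the cyclic-vector route is shorter.
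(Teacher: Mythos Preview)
Your proof is correct. Both your argument and the paper's reduce the theorem to the equivalence between $\partial(s,t)=d$ and the condition that $(A^k)_{st}=0$ for $0\le k\le d-1$ while $(A^d)_{st}\neq 0$, and then link this to the $E_i$ statement via the fact that each $p_i$ is monic of degree $d$. The difference lies in the supporting machinery. For the multiplicity-free step in (i)$\Rightarrow$(ii), the paper does precisely what you sketch in your final paragraph as an alternative: it conjugates $A$ by a permutation into upper Hessenberg form with nonzero subdiagonal and then argues (Lemmas~\ref{lem:Hmin}, \ref{lem:Hdm}) that such a matrix has minimal polynomial equal to its characteristic polynomial. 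Your cyclic-vector argument reaches the same conclusion (minimal $=$ characteristic) more directly, without ever naming the Hessenberg ordering. For the polynomial bookkeeping in both directions, the paper packages it as an abstract lemma (Lemma~\ref{lem:Hpre}): any $d+1$ linearly independent monic degree-$d$ polynomials $f_i$ have $(f_i(A))_{st}$ all equal if and only if $(A^k)_{st}=0$ for $k\le d-1$, proved by noting that the differences $f_i-f_0$ span the polynomials of degree at most $d-1$. Your explicit Lagrange identity $\sum_i \th_i^k/\prod_{j\ne i}(\th_i-\th_j)=\delta_{k,d}$ is the specialization of this to the particular $f_i=p_i$, and makes the (ii)$\Rightarrow$(i) direction slightly more computational but entirely self-contained. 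One cosmetic point: the walks contributing to $(A^k)_{st}$ live in the graph with loops, not in $\Gamma(A)$ as defined in the paper; your remark that a shortest walk is a path implicitly bridges this (cf.\ Lemmas~\ref{lem:premain}, \ref{lem:partial}), so nothing is missing.
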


\medskip

The proof of Theorem \ref{thm:main} is given in Section \ref{sec:mainproof}.
In Sections \ref{sec:assoc}--\ref{sec:PQpoly} we apply Theorem \ref{thm:mainsym}
to symmetric association schemes.
In Sections \ref{sec:assoc} and \ref{sec:rho} we give some basic facts about 
these objects.
In Section \ref{sec:PQpoly} we use these facts and Theorem \ref{thm:mainsym}
to prove Theorems \ref{thm:Q} and \ref{thm:P}.

\section{Hessenberg matrices}
\label{sec:mainproof}

\indent
In this section we prove Theorem \ref{thm:main}.

\medskip

\begin{lemma}     \label{lem:Ari0}
Let $A$ denote a Hessenberg matrix in $\matR$.
Then for $0 \leq r \leq d$ the entries of $A^r$ are described as
follows. 
For $0 \leq i,j\leq d$ the $(i,j)$-entry is nonzero if
$i-j=r$ and zero if $i-j>r$.
\end{lemma}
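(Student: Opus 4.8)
The plan is to argue by induction on $r$, exploiting the recursion $A^{r+1}=A\,A^{r}$ together with the defining shape of a Hessenberg matrix, namely $A_{ik}=0$ whenever $i-k>1$ and $A_{i,i-1}\neq 0$ for $1\le i\le d$. The base case $r=0$ is immediate: $A^{0}=I$ has $(i,j)$-entry $\delta_{i,j}$, which is nonzero exactly when $i-j=0$ and zero when $i-j>0$. (The case $r=1$ is just the definition of Hessenberg, and it is also covered.)

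For the inductive step, fix $r$ with $0\le r<d$, assume the claim for $r$, and expand
\[
 (A^{r+1})_{ij}=\sum_{k=0}^{d}A_{ik}\,(A^{r})_{kj}.
\]
For the vanishing half, suppose $i-j>r+1$: whenever $A_{ik}\neq 0$ we have $k\ge i-1$, hence $k-j>r$ and $(A^{r})_{kj}=0$ by the induction hypothesis, so every term of the sum is $0$. For the nonvanishing half, suppose $i-j=r+1$ (so $i\ge 1$): a term survives only if $A_{ik}\neq 0$, forcing $k\ge i-1$, and $(A^{r})_{kj}\neq 0$, forcing $k-j\le r$ and hence $k\le i-1$ by the induction hypothesis; thus only $k=i-1$ contributes, and $(A^{r+1})_{ij}=A_{i,i-1}\,(A^{r})_{i-1,j}$. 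The first factor is a nonzero subdiagonal entry, the second is nonzero by the induction hypothesis since $(i-1)-j=r$, and $\R$ has no zero divisors, so the product is nonzero.

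I do not expect a genuine obstacle; the one point worth noticing is that the Hessenberg shape forces any walk contributing to $(A^{r})_{ij}$ with $i-j=r$ to descend by exactly one index at each of its $r$ steps, so this walk is unique and the corresponding entry is a plain product of subdiagonal entries rather than a sum that could cancel. This uniqueness is precisely why the ``nonzero'' conclusion needs no nonnegativity hypothesis here, in contrast to the later theorems; everything else is routine index bookkeeping (for instance checking $i\ge 1$ when $i-j=r+1$).
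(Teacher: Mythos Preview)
Your proof is correct and is precisely the induction on $r$ via matrix multiplication that the paper has in mind; the paper merely records it as ``Use matrix multiplication and the definition of Hessenberg,'' and your write-up is a faithful unpacking of that sentence.
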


\begin{proof}
Use matrix multiplication and the definition of Hessenberg.
\end{proof}

\begin{corollary}    \label{cor:Arindep}
Let $A$ denote a Hessenberg matrix in $\matR$. 
Then the matrices $\{A^r\}_{r=0}^d$ are linearly independent.
\end{corollary}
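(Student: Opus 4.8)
The plan is to read the linear independence off Lemma \ref{lem:Ari0} directly. For each $r$ with $0 \le r \le d$, the $(r,0)$-entry of $A^r$ is nonzero: take $i=r$ and $j=0$, so that $i-j=r$, and apply Lemma \ref{lem:Ari0}. For $0 \le s < r$ the same position $(r,0)$ satisfies $i-j=r>s$, so Lemma \ref{lem:Ari0} gives $(A^s)_{r,0}=0$. Hence, relative to the list of matrix positions $(0,0),(1,0),\ldots,(d,0)$, the family $\{A^r\}_{r=0}^d$ is ``triangular'': $A^r$ is the first power in the list that is nonzero in position $(r,0)$.

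With this in hand I would finish by routine elimination. Suppose $\sum_{r=0}^d c_r A^r = 0$ with each $c_r \in \R$, and prove $c_k = 0$ for $0 \le k \le d$ by downward induction on $k$. Assuming $c_r = 0$ for all $r$ with $k < r \le d$, we have $\sum_{r=0}^k c_r A^r = 0$; evaluating the $(k,0)$-entry of both sides annihilates every term with $r<k$ and leaves $c_k (A^k)_{k,0}=0$. Since $(A^k)_{k,0}\neq 0$, we conclude $c_k=0$, completing the induction. (The base case $k=d$ is the one-term sum $c_d A^d$, and the final case $k=0$ uses $A^0=I$, whose $(0,0)$-entry is $1$.)

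I do not anticipate any genuine obstacle here: Lemma \ref{lem:Ari0} already encodes the decisive vanishing and nonvanishing of entries, and the rest is just the standard fact that a triangularly structured family is linearly independent. The only point needing care is the bookkeeping --- fixing column $0$ (one could equally fix row $d$ and vary the column) and running the induction from the top power $A^d$ downward, so that at each stage exactly one coefficient is exposed.
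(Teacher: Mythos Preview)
Your argument is correct and matches the paper's own proof: both fix column $0$, invoke Lemma \ref{lem:Ari0} to see that the $(r,0)$-entry of $A^r$ is nonzero while $(A^s)_{r,0}=0$ for $s<r$, and conclude linear independence from this triangular pattern. The paper phrases this as saying the $0^{\text{th}}$ columns $u_r$ of $A^r$ form a linearly independent family, whereas you spell out the downward induction explicitly, but the content is the same.
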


\begin{proof}
For $0 \leq r \leq d$ let $u_r \in \R^{d+1}$ denote the $0^\text{th}$
column of $A^r$.
By Lemma \ref{lem:Ari0} the $i^\text{th}$ entry of $u_r$ is nonzero
for $i=r$ and zero for $r+1 \leq i \leq d$.
Therefore $\{u_r\}_{r=0}^d$ are linearly independent.
The result follows.
\end{proof}

\begin{lemma}   \label{lem:Hmin}
Let $A$ denote a Hessenberg matrix in $\matR$. 
Then the minimal polynomial of $A$ equals the characteristic 
polynomial of $A$.
\end{lemma}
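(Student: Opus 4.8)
The plan is to show that the minimal polynomial of $A$ has degree $d+1$. This suffices: the minimal polynomial $m(x)$ always divides the characteristic polynomial $c(x)$, both are monic, and $c(x)$ has degree $d+1$ since $A \in \matR$; so $\deg m(x) = d+1 = \deg c(x)$ together with monicity forces $m(x)=c(x)$.

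First I would recall the standard characterization: the degree of $m(x)$ is the least $n$ such that $A^n$ lies in the span of $I,A,A^2,\ldots,A^{n-1}$; equivalently, a nonzero polynomial of degree at most $k$ annihilates $A$ if and only if $I,A,\ldots,A^k$ are linearly dependent in $\matR$. Hence $\deg m(x) \ge d+1$ is equivalent to the linear independence of $\{A^r\}_{r=0}^d$. But that is exactly Corollary \ref{cor:Arindep}, which was established using Lemma \ref{lem:Ari0} by looking at the $0^{\text{th}}$ column of each $A^r$ and noting its lowest nonzero entry is in row $r$. So the key input is already available, and the argument is immediate.

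There is essentially no obstacle here; the proof is a two-line deduction from Corollary \ref{cor:Arindep}. The only point requiring a bit of care is the passage from ``the powers $\{A^r\}_{r=0}^d$ are linearly independent'' to ``no nonzero polynomial of degree $\le d$ annihilates $A$,'' hence ``$\deg m(x)\ge d+1$''; but this is routine linear algebra and needs no further ingredients.
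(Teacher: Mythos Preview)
Your proposal is correct and follows essentially the same route as the paper: both deduce from Corollary~\ref{cor:Arindep} that the minimal polynomial has degree $d+1$, and then use that it is monic and divides the characteristic polynomial to conclude equality.
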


\begin{proof}
By construction the characteristic polynomial of $A$ is monic 
with degree $d+1$.
By elementary linear algebra the minimal polynomial of $A$ is monic and 
divides the characteristic polynomial of $A$. 
By Corollary \ref{cor:Arindep} the minimal polynomial of $A$ has degree 
$d+1$. 
The result follows.
\end{proof}

\begin{lemma}    \label{lem:Hdm}    \samepage
Let $A$ denote a diagonalizable Hessenberg matrix in $\matR$.
Then $A$ is multiplicity-free.
\end{lemma}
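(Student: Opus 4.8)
The plan is to combine Lemma \ref{lem:Hmin} with the standard fact that a diagonalizable matrix has minimal polynomial equal to the product $\prod_\th (x-\th)$ taken over its distinct eigenvalues. First I would recall why that fact holds: if $A$ is diagonalizable with distinct eigenvalues $\th_0,\ldots,\th_{m}$ (possibly with repetition among eigenspaces), then $V$ is the direct sum of the corresponding eigenspaces, and the polynomial $p(x)=\prod_{k=0}^{m}(x-\th_k)$ annihilates each eigenspace, hence $p(A)=0$; moreover no proper divisor of $p$ can annihilate $A$, since dropping a factor $(x-\th_k)$ leaves a polynomial that acts as a nonzero scalar on the $\th_k$-eigenspace. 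Thus the minimal polynomial of $A$ is exactly $p(x)$, whose degree equals the number of distinct eigenvalues of $A$.

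Next I would invoke Lemma \ref{lem:Hmin}, which tells us that for a Hessenberg matrix the minimal polynomial has degree $d+1$. Combining this with the previous paragraph, $A$ has exactly $d+1$ distinct eigenvalues. Since $A$ is diagonalizable, the $d+1$ associated eigenspaces are nonzero, mutually in direct sum, and span $V=\R^{d+1}$; a count of dimensions then forces each eigenspace to have dimension exactly $1$. By the definition of multiplicity-free given in the introduction, this is precisely the conclusion that $A$ is multiplicity-free.

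I do not anticipate a serious obstacle here: the argument is essentially bookkeeping with minimal and characteristic polynomials once Lemma \ref{lem:Hmin} is in hand. The only point requiring a little care is the general linear-algebra lemma that a diagonalizable operator has minimal polynomial with simple roots and degree equal to the number of distinct eigenvalues; I would either cite it as elementary linear algebra (in the same spirit as the proof of Lemma \ref{lem:Hmin}) or spell out the two-line argument above. Everything else is immediate from the dimension count.
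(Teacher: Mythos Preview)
Your proposal is correct and follows essentially the same route as the paper: both arguments combine Lemma~\ref{lem:Hmin} with the standard fact that the minimal polynomial of a diagonalizable matrix has only simple roots, and then conclude that the $d+1$ roots of the characteristic polynomial are real and distinct. The paper phrases the final step as ``the roots $\{\th_i\}_{i=0}^d$ are mutually distinct'' rather than your explicit dimension count, but this is a difference in presentation, not in substance.
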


\begin{proof}
Let $\{\th_i\}_{i=0}^d$ denote the roots of the characteristic
polynomial of $A$.
We have $\th_i \in \R$ $(0 \leq i \leq d)$ since $A$ is diagonalizable.
Moreover $\{\th_i\}_{i=0}^d$ are mutually distinct since the
minimal polynomial of $A$ equals the characteristic polynomial of $A$
by Lemma \ref{lem:Hmin} and since the roots of the minimal polynomial
are mutually distinct.
Thus $A$ is multiplicity-free.
\end{proof}

\medskip

For $A \in \matR$ let $\Gamma_\ell(A)$ denote the directed graph with
vertex set $\{0,1,\ldots,d\}$, where $i \rightarrow j$ whenever 
$A_{ij}\neq 0$.

\medskip

\begin{lemma}    \label{lem:premain}   \samepage
Let $A$ denote a nonnegative matrix in $\matR$.
Then the following are equivalent for $0 \leq r,s,t \leq d$.
\begin{itemize}
\item[\rm (i)]
The $(s,t)$-entry of $A^r$ is nonzero.
\item[\rm (ii)]
In the graph $\Gamma_\ell(A)$ there exists a directed path of length 
$r$ from $s$ to $t$.
\end{itemize}
\end{lemma}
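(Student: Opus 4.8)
The plan is to expand the matrix power combinatorially and then exploit nonnegativity to rule out cancellation. First I would write, for $0 \leq r \leq d$ and $0 \leq s,t \leq d$,
\[
 (A^r)_{st} = \sum A_{s,i_1} A_{i_1,i_2} \cdots A_{i_{r-1},t},
\]
where the sum is over all sequences $i_1,\ldots,i_{r-1}$ with $0 \leq i_k \leq d$; this is just the definition of matrix multiplication applied $r$ times (with the conventions that the empty product is read as $\delta_{s,t}$ when $r=0$ and the sum is read as $A_{s,t}$ when $r=1$). Setting $i_0=s$ and $i_r=t$, each summand is the weight $\prod_{k=0}^{r-1} A_{i_k,i_{k+1}}$ of the sequence $i_0,i_1,\ldots,i_r$, which is a directed path of length $r$ in $\Gamma_\ell(A)$ precisely when every factor in this product is nonzero.

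Next I would invoke the hypothesis that $A$ is nonnegative. Then every summand $\prod_{k=0}^{r-1} A_{i_k,i_{k+1}}$ above is a nonnegative real number, so the sum is nonzero if and only if at least one summand is nonzero. Since a product of nonnegative reals is nonzero exactly when each factor is nonzero, a summand is nonzero if and only if $A_{i_k,i_{k+1}} \neq 0$ for $0 \leq k \leq r-1$, which by the definition of $\Gamma_\ell(A)$ says exactly that $i_0 \to i_1 \to \cdots \to i_r$ is a directed path of length $r$ from $s$ to $t$. Stringing these equivalences together yields (i) $\Leftrightarrow$ (ii).

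If one prefers not to display the $r$-fold product explicitly, the same reasoning can be packaged as an induction on $r$: the cases $r=0$ and $r=1$ are immediate, and for the inductive step one uses $(A^r)_{st} = \sum_{u=0}^d (A^{r-1})_{su} A_{ut}$, notes that each term is a product of two nonnegative reals, and applies the induction hypothesis together with the observation that a directed path of length $r$ from $s$ to $t$ is the same thing as a directed path of length $r-1$ from $s$ to some vertex $u$ followed by an edge $u \to t$.

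I do not anticipate any genuine obstacle here; the only point that requires care is the use of nonnegativity. It is exactly what guarantees that the walk-weights cannot cancel in the sum: without it, the implication (i) $\Rightarrow$ (ii) still holds (a nonzero entry forces some nonzero summand, hence a path), but (ii) $\Rightarrow$ (i) can fail, since a genuine directed path of length $r$ might contribute a summand that is cancelled by others.
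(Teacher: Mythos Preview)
Your argument is correct and is exactly the approach the paper takes: the paper's proof is the one-line remark ``Consider the $(s,t)$-entry of $A^r$ using matrix multiplication,'' and you have simply spelled out that computation in full, including the role of nonnegativity in preventing cancellation.
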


\begin{proof}
Consider the $(s,t)$-entry of $A^r$ using matrix multiplication.
\end{proof}

\begin{lemma}    \label{lem:partial}  \samepage
The following are equivalent for all $A \in \matR$ and 
$0 \leq r,s,t \leq d$.
\begin{itemize}
\item[\rm (i)]
$\partial(s,t)=r$ in $\Gamma(A)$.
\item[\rm (ii)]
$\partial(s,t)=r$ in $\Gamma_\ell(A)$.
\end{itemize}
\end{lemma}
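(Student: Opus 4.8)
The plan is to compare directed walks in the two graphs directly. The graphs $\Gamma(A)$ and $\Gamma_\ell(A)$ have the same vertex set $\{0,1,\ldots,d\}$, and the only difference between their edge sets is that $\Gamma_\ell(A)$ may contain loops $i\to i$ --- present precisely when $A_{ii}\neq 0$ --- whereas $\Gamma(A)$ has no loops. In particular every edge of $\Gamma(A)$ is an edge of $\Gamma_\ell(A)$, so every directed path in $\Gamma(A)$ is a directed path in $\Gamma_\ell(A)$ of the same length. Hence, if $\partial(s,t)=r$ in $\Gamma(A)$, then $\Gamma_\ell(A)$ contains a directed path of length $r$ from $s$ to $t$, and it contains no shorter one, since a shorter directed path in $\Gamma_\ell(A)$ would --- by the loop-deletion argument below --- give a shorter one in $\Gamma(A)$.

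For the reverse direction I would take a directed path $P$ from $s$ to $t$ in $\Gamma_\ell(A)$ of least possible length, say $r'$. If $P$ were to traverse a loop $i\to i$, then deleting that step would yield a strictly shorter directed walk from $s$ to $t$ in $\Gamma_\ell(A)$, contradicting the minimality of $r'$. So $P$ uses no loop; every edge it traverses therefore lies in $\Gamma(A)$, and $P$ is a directed path of length $r'$ in $\Gamma(A)$. No shorter directed path from $s$ to $t$ exists in $\Gamma(A)$, since such a path would also be a directed path in $\Gamma_\ell(A)$, again contradicting the minimality of $r'$. Thus $\partial(s,t)=r'$ in $\Gamma(A)$ as well, so the value of $\partial(s,t)$ is the same whether computed in $\Gamma(A)$ or in $\Gamma_\ell(A)$.

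Putting the two directions together gives the claimed equivalence of (i) and (ii) for every $r$ with $0\leq r\leq d$; the case in which no directed path from $s$ to $t$ exists at all is handled by the same loop-deletion observation --- a directed walk in $\Gamma_\ell(A)$ collapses to one in $\Gamma(A)$ after deleting loops --- so in that case neither (i) nor (ii) holds for any $r$ and the equivalence is vacuous. I do not anticipate a real obstacle here; the only point deserving a word of care is the possibility $s=t$, where $\partial(s,t)=0$ in both graphs regardless of whether $A_{ss}$ vanishes, and this is already covered since a loop can only lengthen a walk. (If one prefers, Lemma~\ref{lem:premain} can be invoked to restate the $\Gamma_\ell(A)$ side in terms of nonvanishing of entries of $A^{r}$, but the combinatorial argument above seems more transparent.)
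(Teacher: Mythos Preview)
Your argument is correct: the two graphs differ only by loops, and since a shortest directed path never uses a loop, the directed distance is the same in both. The paper's own proof is simply ``Routine verification,'' so you have written out exactly the verification the authors had in mind.
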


\begin{proof}
Routine verification.
\end{proof}

\begin{lemma}            \label{lem:main}     \samepage
Let $A$ denote a nonnegative matrix in $\matR$.
Then the following are equivalent for $0 \leq s,t \leq d$.
\begin{itemize}
\item[\rm (i)]
The $(s,t)$-entry of $A^r$ is nonzero if $r=d$ and zero if $r<d$
$(0 \leq r \leq d)$.
\item[\rm (ii)]
$\partial(s,t)=d$ in $\Gamma(A)$.
\end{itemize}
\end{lemma}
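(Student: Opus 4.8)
The plan is to obtain Lemma~\ref{lem:main} as a formal consequence of Lemma~\ref{lem:premain}, Lemma~\ref{lem:partial}, and the definition of the directed distance function $\partial$; no new computation is required. The point is that condition (i) is merely a matrix-power encoding of a statement about directed paths in $\Gamma_\ell(A)$, while condition (ii) is the graph-theoretic packaging of the same statement in $\Gamma(A)$, and the two graphs have the same distance function.

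Concretely, I would fix $s,t$ and proceed as follows. First, for each $r$ with $0\leq r\leq d$, apply Lemma~\ref{lem:premain} to rewrite ``the $(s,t)$-entry of $A^r$ is nonzero'' as ``there exists a directed path of length $r$ from $s$ to $t$ in $\Gamma_\ell(A)$''. Hence condition (i) is equivalent to the assertion that in $\Gamma_\ell(A)$ there is a directed path of length $d$ from $s$ to $t$ but no directed path of length $r$ from $s$ to $t$ for any $r$ with $0\leq r<d$. Second, I would note that this assertion is exactly ``$\partial(s,t)=d$ in $\Gamma_\ell(A)$'': the length-$d$ path forces $\partial(s,t)\leq d$, and the absence of shorter paths forces $\partial(s,t)\geq d$. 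Finally, Lemma~\ref{lem:partial} identifies ``$\partial(s,t)=d$ in $\Gamma_\ell(A)$'' with ``$\partial(s,t)=d$ in $\Gamma(A)$'', which is condition (ii). Reading this chain in either direction yields the desired equivalence.

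Since the argument is essentially bookkeeping, there is no substantial obstacle. The only step meriting a moment's care is the definitional matching in the middle: one must observe that a directed path of length strictly less than $d$ automatically has length in $\{0,1,\ldots,d-1\}$, so the finite range $0\leq r\leq d$ in condition (i) captures the full ``no shorter path'' clause in the definition of $\partial$, and one should keep an eye on the degenerate boundary cases (for instance $s=t$, which forces $d=0$). These checks are routine.
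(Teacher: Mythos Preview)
Your proposal is correct and matches the paper's approach exactly: the paper's proof of this lemma consists of the single line ``Follows from Lemmas \ref{lem:premain} and \ref{lem:partial}'', and your write-up is simply a careful unpacking of that sentence. The extra care you take with the definitional matching and the degenerate case $s=t$ is fine but not strictly necessary.
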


\begin{proof}
Follows from Lemmas \ref{lem:premain} and \ref{lem:partial}
\end{proof}

\medskip

Let $\lambda$ denote an indeterminate and let $\R[\lambda]$ denote the
$\R$-algebra consisting of the polynomials in $\lambda$ that have
all coefficients in $\R$.

\medskip

\begin{lemma}    \label{lem:Hpre}       \samepage
For $0 \leq i \leq d$ let $f_i \in \R[\lambda]$ be monic with degree $d$,
and assume $\{f_i\}_{i=0}^d$ are linearly independent.
Then the following are equivalent for all $A \in \matR$ and 
$0 \leq s,t \leq d$.
\begin{itemize}
\item[\rm (i)]
The $(s,t)$-entry of $A^r$ is zero for $0 \leq r \leq d-1$.
\item[\rm (ii)]
The $(s,t)$-entry of $f_i(A)$ is equal to the $(s,t)$-entry of $A^d$ 
for $0 \leq i \leq d$.
\item[\rm (iii)]
The $(s,t)$-entry of $f_i(A)$ is independent of $i$ for $0 \leq i \leq d$.
\end{itemize}
\end{lemma}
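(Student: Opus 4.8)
The plan is to exploit the hypothesis that each $f_i$ is monic of degree $d$. Writing $f_i(\lambda) = \lambda^d + g_i(\lambda)$ with $g_i \in \R[\lambda]$ of degree at most $d-1$, we get $f_i(A) = A^d + g_i(A)$, where $g_i(A)$ is an $\R$-linear combination of $I, A, \ldots, A^{d-1}$. This settles the two easy implications at once. For (i) $\Rightarrow$ (ii): if the $(s,t)$-entry of $A^r$ vanishes for $0 \leq r \leq d-1$, then the $(s,t)$-entry of $g_i(A)$ vanishes, so the $(s,t)$-entry of $f_i(A)$ equals that of $A^d$ for every $i$. The implication (ii) $\Rightarrow$ (iii) is immediate, since a number that equals the $(s,t)$-entry of $A^d$ for all $i$ is certainly independent of $i$.

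The real content is (iii) $\Rightarrow$ (i). First I would pass to the differences $h_i := f_i - f_0$ for $1 \leq i \leq d$. Because the $f_i$ are monic of the common degree $d$, each $h_i$ has degree at most $d-1$; and the $h_i$ are linearly independent, since a relation $\sum_{i=1}^d c_i h_i = 0$ rewrites as $\sum_{i=1}^d c_i f_i - \bigl(\sum_{i=1}^d c_i\bigr) f_0 = 0$, whence $c_1 = \cdots = c_d = 0$ by linear independence of $\{f_i\}_{i=0}^d$. Thus $\{h_i\}_{i=1}^d$ is a set of $d$ linearly independent elements of the $d$-dimensional space of polynomials in $\R[\lambda]$ of degree at most $d-1$, hence a basis of that space. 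Now hypothesis (iii) says exactly that the $(s,t)$-entry of $h_i(A) = f_i(A) - f_0(A)$ is $0$ for $1 \leq i \leq d$. Since the map sending $g \in \R[\lambda]$ to the $(s,t)$-entry of $g(A)$ is $\R$-linear, and it vanishes on the spanning set $\{h_i\}_{i=1}^d$, it vanishes on every polynomial of degree at most $d-1$; applying this to $g = \lambda^r$ for $0 \leq r \leq d-1$ yields (i).

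I do not expect a genuine obstacle. The argument is elementary once monicity is used to replace the $f_i$ by their differences, which drop one degree; the remaining ingredients are the linear-independence and dimension bookkeeping in $\R[\lambda]$ together with the linearity of $g \mapsto (s,t)$-entry of $g(A)$. It is worth noting that, unlike the other results in this section, Lemma \ref{lem:Hpre} makes no use of nonnegativity of $A$ --- it is a purely formal statement about $\matR$ and $\R[\lambda]$ --- so no positivity considerations enter.
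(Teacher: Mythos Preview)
Your proof is correct and follows essentially the same approach as the paper's: both arguments handle (i)$\Rightarrow$(ii) and (ii)$\Rightarrow$(iii) trivially via monicity, and for (iii)$\Rightarrow$(i) both pass to the differences $f_i - f_0$, observe these are $d$ linearly independent polynomials of degree at most $d-1$ and hence a basis, and conclude by linearity. Your write-up is slightly more explicit than the paper's (you spell out why the differences remain linearly independent and name the linear functional $g \mapsto (g(A))_{s,t}$), but there is no substantive difference.
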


\begin{proof}
(i)$\Rightarrow$(ii):
Since $f_i$ is monic with degree $d$.

(ii)$\Rightarrow$(iii):
Clear.

(iii)$\Rightarrow$(i):
For $1 \leq i \leq d$ define $g_i=f_i-f_0$ and observe that $g_i$ has degree 
at most $d-1$.
Note that $\{g_i\}_{i=1}^d$ are linearly independent. 
So $\{g_i\}_{i=1}^d$ form a basis for the subspace of 
$\R[\lambda]$ consisting of the polynomials with degree at most $d-1$.
So for $0 \leq r \leq d-1$, 
$\lambda^r$ is a linear combination of $\{g_i\}_{i=1}^d$.
By construction the $(s,t)$-entry of $g_i(A)$ is zero for $1 \leq i \leq d$.
By these comments the $(s,t)$-entry of $A^r$ is zero for $0 \leq r \leq d-1$.
\end{proof}

\medskip

Referring to Lemma \ref{lem:Hpre} we now make a specific choice for the
polynomials $\{f_i\}_{i=0}^d$.

\medskip

\begin{lemma}    \label{lem:H}    \samepage
Assume $A \in \matR$ is multiplicity-free with eigenvalues $\{\th_i\}_{i=0}^d$.
For $0 \leq i \leq d$ define a polynomial $f_i \in \R[\lambda]$ by
\begin{equation}    \label{eq:fi}
 f_i=(\lambda-\th_0)\cdots(\lambda-\th_{i-1})
     (\lambda-\th_{i+1})\cdots(\lambda-\th_d).
\end{equation}
Then
\begin{itemize}
\item[\rm (i)]
$f_i(A)=f_i(\th_i)E_i$ for $0 \leq i \leq d$.
\item[\rm (ii)]
$f_i$ is monic with degree $d$ for $0 \leq i \leq d$.
\item[\rm (iii)]
$\{f_i\}_{i=0}^d$ are linearly independent.
\end{itemize}
\end{lemma}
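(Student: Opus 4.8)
The plan is to verify the three parts in order, using the primitive idempotent machinery recalled in the introduction. For part~(i), recall that $A=\sum_{j=0}^d \th_j E_j$ and $E_jE_k=\delta_{j,k}E_j$, so for any polynomial $g\in\R[\lambda]$ we have $g(A)=\sum_{j=0}^d g(\th_j)E_j$. Applying this to $f_i$ and observing that each factor $(\th_j-\th_k)$ appearing in $f_i(\th_j)$ includes the factor $(\th_j-\th_j)=0$ whenever $j\neq i$ (since $f_i$ has roots at every $\th_k$ with $k\neq i$), we get $f_i(\th_j)=0$ for $j\neq i$, hence $f_i(A)=f_i(\th_i)E_i$. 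This is the key identity and everything else is bookkeeping.

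Part~(ii) is immediate from the definition \eqref{eq:fi}: $f_i$ is a product of exactly $d$ monic linear factors $(\lambda-\th_k)$ with $k\neq i$, so it is monic of degree $d$.

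For part~(iii), I would argue that $f_i(\th_i)\neq 0$ for each $i$, since $f_i(\th_i)=\prod_{k\neq i}(\th_i-\th_k)$ and the $\th_k$ are mutually distinct (as $A$ is multiplicity-free). Combining with part~(i), the matrices $\{f_i(A)\}_{i=0}^d$ are nonzero scalar multiples of the primitive idempotents $\{E_i\}_{i=0}^d$, which are linearly independent (for instance because $E_iE_j=\delta_{i,j}E_i$ forces any linear dependence $\sum_i c_i E_i=0$ to collapse, upon multiplying by $E_j$, to $c_jE_j=0$ and hence $c_j=0$). Since the map $g\mapsto g(A)$ is a ring homomorphism from $\R[\lambda]$ into $\matR$ whose restriction to polynomials of degree at most $d$ is a bijection onto $\mathrm{span}\{E_i\}_{i=0}^d$ (both are $(d+1)$-dimensional, and the map is injective there because the minimal polynomial of $A$ has degree $d+1$), linear independence of $\{f_i(A)\}$ pulls back to linear independence of $\{f_i\}$. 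No step here is a genuine obstacle; the only point requiring a moment's care is confirming that multiplicity-freeness gives both the distinctness needed for $f_i(\th_i)\neq 0$ and the degree-$(d+1)$ minimal polynomial needed for the injectivity in part~(iii).
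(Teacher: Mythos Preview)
Your argument is correct. Parts (i) and (ii) match the paper's proof, just spelled out in more detail (the paper simply says ``Compare \eqref{eq:defEi} and \eqref{eq:fi}'' for (i) and ``Clear'' for (ii)).

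For (iii), however, you take a genuinely different route. The paper's proof is a one-liner: observe that $f_i(\th_j)$ is zero if $i\neq j$ and nonzero if $i=j$; any linear relation $\sum_i c_i f_i=0$ then collapses, upon evaluation at $\lambda=\th_j$, to $c_j f_j(\th_j)=0$, hence $c_j=0$. You instead push the polynomials forward to matrices via $g\mapsto g(A)$, use the linear independence of the idempotents $\{E_i\}$, and then pull back along the injectivity of the evaluation map on degree-$\le d$ polynomials. This is correct, but it requires the extra ingredient that the minimal polynomial of $A$ has degree $d+1$, which you rightly note follows from multiplicity-freeness. The paper's direct evaluation argument avoids that detour entirely: it works at the level of polynomials and never needs to invoke $A$, the idempotents, or the minimal polynomial. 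Your approach does have the virtue of making the connection to the $E_i$ explicit, which is thematically in line with the rest of the paper, but for this particular lemma the evaluation-at-$\th_j$ argument is cleaner.
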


\begin{proof}
(i):
Compare \eqref{eq:defEi} and \eqref{eq:fi}.

(ii):
Clear.

(iii):
For $0 \leq i,j \leq d$ the scalar $f_i(\th_j)$ is zero if
$i\not=j$ and nonzero if $i=j$. 
\end{proof}

\begin{proofof}{Theorem \ref{thm:main}}

(i)$\Rightarrow$(ii):
By the comments above Theorem \ref{thm:main} there exists a Hessenberg
ordering $\{x_i\}_{i=0}^d$ of the vertices of $\Gamma(A)$ such that 
$x_0=t$ and $x_d=s$. 
Let $\Lambda \in \matR$ denote the permutation matrix that corresponds to the 
permutation $i \mapsto x_i$ $(0 \leq i \leq d)$. 
Then $\Lambda A \Lambda^{-1}$ is Hessenberg.
We assume $A$ is diagonalizable so $\Lambda A \Lambda^{-1}$ is diagonalizable. 
Now $\Lambda A \Lambda^{-1}$ is multiplicity-free by Lemma \ref{lem:Hdm} so
$A$ is multiplicity-free. 
Define the polynomials $\{f_i\}_{i=0}^d$ as in Lemma \ref{lem:H}.
By Lemma \ref{lem:main}, for $0 \leq r \leq d$
the $(s,t)$-entry of $A^r$ is nonzero if $r=d$
and zero if $r< d$.
By this and Lemma \ref{lem:Hpre}, 
the $(s,t)$-entry of $f_i(A)$ is independent of $i$ for $0 \leq i \leq d$,
and this common value is nonzero.
By this and Lemma \ref{lem:H}(i), the $(s,t)$-entry of $E_i$ times
$f_i(\th_i)$ is independent of $i$ for $0 \leq i \leq d$, and this
common value is nonzero.

(ii)$\Rightarrow$(i):
The matrix $A$ is diagonalizable since it is multiplicity-free.
Define $\{f_i\}_{i=0}^d$ as in Lemma \ref{lem:H}.
By assumption,
the $(s,t)$-entry of $E_i$ times $f_i(\th_i)$
is independent of $i$ for $0 \leq i \leq d$, and this common
value is nonzero.
By this and Lemma \ref{lem:H}(i),
the $(s,t)$-entry of $f_i(A)$ is independent of $i$ for $0 \leq i \leq d$,
and this common value is nonzero.
By this and Lemma \ref{lem:Hpre}, for $0 \leq r \leq d$
the $(s,t)$-entry of $A^r$ is nonzero if $r=d$ and zero if $r< d$.
By this and Lemma \ref{lem:main} we find $\partial(s,t)=d$.
\end{proofof}

\section{Tridiagonal matrices}
\label{sec:mainsymproof}

\indent
In this section we prove Theorem \ref{thm:mainsym}.

\medskip

\begin{lemma}    \label{lem:AijAji}
Let $A$ denote a symmetrizable matrix in $\matR$.
Then $A_{ij}=0$ if and only if
$A_{ji}=0$ $(0 \leq i,j \leq d)$.
\end{lemma}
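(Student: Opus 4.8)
The plan is to unwind the definition of symmetrizable and track how the conjugating diagonal matrix affects individual entries. First I would fix an invertible diagonal matrix $\Delta \in \matR$ such that $B := \Delta A \Delta^{-1}$ is symmetric, and write $\Delta = \mathrm{diag}(\delta_0,\delta_1,\ldots,\delta_d)$ where each $\delta_i$ is a nonzero real number. Then for $0 \leq i,j \leq d$ the $(i,j)$-entry of $B$ is $B_{ij} = \delta_i A_{ij} \delta_j^{-1}$.

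Next I would invoke symmetry of $B$, namely $B_{ij} = B_{ji}$, to get $\delta_i \delta_j^{-1} A_{ij} = \delta_j \delta_i^{-1} A_{ji}$, and hence $A_{ij} = (\delta_j^2 \delta_i^{-2}) A_{ji}$. Since $\delta_i$ and $\delta_j$ are nonzero, the scalar $\delta_j^2 \delta_i^{-2}$ is nonzero, so $A_{ij} = 0$ if and only if $A_{ji} = 0$. There is essentially no obstacle here; the only point requiring a word of care is that $\Delta$ being invertible forces every diagonal entry $\delta_i$ to be nonzero, which is exactly what makes the displayed proportionality reversible.
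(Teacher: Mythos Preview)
Your proof is correct and follows essentially the same approach as the paper: both compute the $(i,j)$- and $(j,i)$-entries of $\Delta A \Delta^{-1}$, equate them by symmetry, and conclude from the invertibility of $\Delta$ that $A_{ij}$ and $A_{ji}$ vanish together. Your version is slightly more explicit in writing out the proportionality $A_{ij} = (\delta_j^2\delta_i^{-2})A_{ji}$, but the argument is the same.
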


\begin{proof}
Since $A$ is symmetrizable, there exists an invertible diagonal matrix 
$\Delta \in \matR$ such that $\Delta A \Delta^{-1}$ is symmetric.
Comparing the $(i,j)$-entry and the $(j,i)$-entry of 
$\Delta A \Delta^{-1}$ we find
$\Delta_{ii}A_{ij}\Delta_{jj}^{-1} = \Delta_{jj}A_{ji}\Delta_{ii}^{-1}$.
The result follows.
\end{proof}

\begin{lemma}    \label{lem:symdiag}     \samepage
Let $A$ denote a symmetrizable matrix in $\matR$.
Then $A$ is diagonalizable.
\end{lemma}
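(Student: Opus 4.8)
The plan is to reduce the claim to the well-known fact that a real symmetric matrix is diagonalizable. Since $A$ is symmetrizable, there is an invertible diagonal matrix $\Delta \in \matR$ such that $B := \Delta A \Delta^{-1}$ is symmetric. A real symmetric matrix is diagonalizable by the spectral theorem, so $B$ has a basis of eigenvectors in $V$; equivalently, the eigenspaces of $B$ span $V$.

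Next I would transport this back to $A$ via the conjugation by $\Delta$. If $W$ is an eigenspace of $B$ with eigenvalue $\th$, then $\Delta^{-1} W$ is an eigenspace of $A$ with the same eigenvalue: indeed for $w \in W$ we have $A(\Delta^{-1}w) = \Delta^{-1}(\Delta A \Delta^{-1})w = \Delta^{-1}Bw = \th\,\Delta^{-1}w$, and $\Delta^{-1}$ is invertible so it sends a nonzero subspace to a nonzero subspace of the same dimension, and distinct eigenvalues of $B$ yield distinct eigenvalues of $A$. Since $\Delta^{-1}$ is an invertible linear map on $V$, it carries a spanning set of eigenspaces of $B$ to a spanning set of eigenspaces of $A$. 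Hence the eigenspaces of $A$ span $V$, which is exactly the definition of $A$ being diagonalizable.

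There is no real obstacle here; the only point to be a little careful about is the bookkeeping that conjugation by $\Delta$ gives a bijection between eigenspaces of $B$ and eigenspaces of $A$ preserving eigenvalues, so that "eigenspaces span $V$" is genuinely transferred. This is routine once the spectral theorem is invoked, so the proof will be short.
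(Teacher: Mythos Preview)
Your proposal is correct and follows essentially the same route as the paper: use the symmetrizing conjugation to pass to a real symmetric matrix, apply the spectral theorem, and conclude that $A$ is diagonalizable since it is conjugate to a diagonalizable matrix. The paper's proof is terser, citing a reference for the diagonalizability of real symmetric matrices and simply noting that diagonalizability is preserved under conjugation, whereas you spell out the eigenspace bookkeeping explicitly.
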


\begin{proof}
Since $A$ is symmetrizable, there exists an invertible diagonal
matrix $\Delta \in \matR$ such that  $\Delta A\Delta^{-1}$ symmetric.
By \cite[Corollary 3.3.1]{Serre} every symmetric matrix in $\matR$
is diagonalizable.
Therefore  $\Delta A\Delta^{-1}$ is diagonalizable, so $A$ is
diagonalizable.
\end{proof}

\begin{lemma}   \label{lem:psym}    \samepage
Let $A \in \matR$ denote a symmetrizable matrix.
Then $\Lambda A\Lambda^{-1}$ is symmetrizable for every permutation matrix 
$\Lambda \in \matR$.
\end{lemma}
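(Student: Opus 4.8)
The plan is to produce an explicit conjugating diagonal matrix for $\Lambda A\Lambda^{-1}$ out of the one promised for $A$, using the fact that a permutation matrix $\Lambda$ satisfies $\Lambda^{-1}=\Lambda^t$ and that conjugation by $\Lambda$ sends diagonal matrices to diagonal matrices.

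First I would invoke symmetrizability of $A$ to fix an invertible diagonal matrix $\Delta\in\matR$ with $\Delta A\Delta^{-1}$ symmetric. Next I would set $\Delta'=\Lambda\Delta\Lambda^{-1}$ and observe that $\Delta'$ is again an invertible diagonal matrix, since conjugating a diagonal matrix by a permutation matrix merely permutes its diagonal entries and preserves invertibility. Then I would compute
\[
 \Delta'\,(\Lambda A\Lambda^{-1})\,\Delta'^{-1}
 = \Lambda\Delta\Lambda^{-1}\Lambda A\Lambda^{-1}\Lambda\Delta^{-1}\Lambda^{-1}
 = \Lambda\,(\Delta A\Delta^{-1})\,\Lambda^{-1}.
\]

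Finally I would check that the right-hand side is symmetric. Writing $S=\Delta A\Delta^{-1}$, which is symmetric by construction, and using $\Lambda^{-1}=\Lambda^t$, we get $(\Lambda S\Lambda^{-1})^t=(\Lambda S\Lambda^t)^t=\Lambda S^t\Lambda^t=\Lambda S\Lambda^{-1}$. Hence $\Delta'$ conjugates $\Lambda A\Lambda^{-1}$ to a symmetric matrix, so $\Lambda A\Lambda^{-1}$ is symmetrizable. There is no real obstacle here; the only point requiring a moment's care is the identity $\Lambda^{-1}=\Lambda^t$ for permutation matrices, which is what makes conjugation by $\Lambda$ commute with transposition.
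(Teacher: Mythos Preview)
Your proof is correct and follows exactly the same approach as the paper: define $\Delta'=\Lambda\Delta\Lambda^{-1}$, note it is invertible diagonal, and use $\Lambda^{-1}=\Lambda^t$ to see that $\Delta'(\Lambda A\Lambda^{-1})\Delta'^{-1}=\Lambda(\Delta A\Delta^{-1})\Lambda^{-1}$ is symmetric. If anything, you spell out the transpose computation more explicitly than the paper does.
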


\begin{proof}
Since $A$ is symmetrizable, there exists an invertible diagonal matrix
$\Delta \in \matR$ such that $\Delta A\Delta^{-1}$ is symmetric.
Set $\Delta'=\Lambda\Delta \Lambda^{-1}$, and 
observe that $\Delta'$ is invertible diagonal.
Using $\Lambda^{-1}=\Lambda^t$ we find
$\Delta' \Lambda A\Lambda^{-1}(\Delta')^{-1}$ is symmetric.
Now $\Lambda A\Lambda^{-1}$ is symmetrizable.
\end{proof}

\begin{lemma}      \label{lem:sym}     \samepage
Let $A$ denote a nonnegative irreducible tridiagonal matrix in $\matR$.
Then $A$ is symmetrizable and multiplicity-free.
\end{lemma}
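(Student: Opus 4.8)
The plan is to prove the two assertions separately, obtaining the multiplicity-free property from symmetrizability together with the results of Section \ref{sec:mainproof}.

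For symmetrizability I would exhibit the conjugating diagonal matrix explicitly. Write $b_i = A_{i,i+1}$ for $0 \le i \le d-1$ and $c_i = A_{i,i-1}$ for $1 \le i \le d$; since $A$ is nonnegative and irreducible tridiagonal, each $b_i$ and each $c_i$ is strictly positive, so each ratio $b_{i-1}/c_i$ is a positive real. Let $\Delta \in \matR$ be the diagonal matrix with $\Delta_{00}=1$ and $\Delta_{ii} = \Delta_{i-1,i-1}\,(b_{i-1}/c_i)^{1/2}$ for $1 \le i \le d$; then $\Delta$ is invertible. The matrix $\Delta A \Delta^{-1}$ is tridiagonal with the same diagonal as $A$, and its $(i,i+1)$- and $(i+1,i)$-entries are $\Delta_{ii}b_i\Delta_{i+1,i+1}^{-1}$ and $\Delta_{i+1,i+1}c_{i+1}\Delta_{ii}^{-1}$ respectively, which agree by the defining relation $\Delta_{i+1,i+1}^2 = \Delta_{ii}^2\,b_i/c_{i+1}$. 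Hence $\Delta A \Delta^{-1}$ is symmetric, so $A$ is symmetrizable. This computation is routine; the one point worth flagging is that it uses irreducibility (both $b_{i-1}\neq 0$ and $c_i \neq 0$), not merely the tridiagonal shape.

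For the multiplicity-free property I would reuse Section \ref{sec:mainproof}. An irreducible tridiagonal matrix is Hessenberg, since all entries below its subdiagonal are zero and its subdiagonal entries are nonzero. By the symmetrizability just established and Lemma \ref{lem:symdiag}, $A$ is diagonalizable. Then Lemma \ref{lem:Hdm}, applied to the diagonalizable Hessenberg matrix $A$, shows that $A$ is multiplicity-free, which completes the proof.

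I do not anticipate a real obstacle here, as each half takes only a few lines. The two things to be careful about are invoking irreducibility at the right moment in the symmetrization, and observing that ``irreducible tridiagonal'' is a special case of ``Hessenberg'', so that Lemma \ref{lem:Hdm} can be applied directly rather than reproving that the minimal polynomial of $A$ equals its characteristic polynomial.
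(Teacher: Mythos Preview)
Your argument is correct and is essentially the same as the paper's: your diagonal matrix $\Delta$ satisfies $\Delta_{ii}^2=\prod_{j=1}^{i} b_{j-1}/c_j$, which is exactly the paper's $\kappa_i$, and the paper likewise finishes by invoking Lemma~\ref{lem:symdiag} and then Lemma~\ref{lem:Hdm}. The only cosmetic difference is that the paper verifies symmetry via the relation $KA=A^tK$ with $K=\Delta^2$, whereas you check the off-diagonal entries of $\Delta A\Delta^{-1}$ directly.
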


\begin{proof}
We first show that $A$ is symmetrizable.
Since $A$ is irreducible and nonnegative we have
$A_{i,i-1}> 0$ and $A_{i-1,i}>0$ for $1 \leq i \leq d$.
For $0 \leq i \leq d$ define
\[
    \kappa_i = \frac{A_{01}A_{12} \cdots A_{i-1,i}}
               {A_{10}A_{21} \cdots A_{i,i-1}}
\]
and note that $\kappa_i > 0$.
Define a diagonal matrix $K \in \matR$ with $(i,i)$-entry $\kappa_i$
for $0 \leq i \leq d$.
Using matrix multiplication one finds $KA=A^tK$.
Define a diagonal matrix $\Delta \in \matR$ with $(i,i)$-entry
$\sqrt{\kappa_i}$ for $0 \leq i \leq d$, so that $\Delta^2=K$. 
By this and $KA=A^tK$ one finds that $\Delta A \Delta^{-1}$ is symmetric.
Therefore $A$ is symmetrizable.
Now $A$ is diagonalizable by Lemma \ref{lem:symdiag}
and multiplicity-free by Lemma \ref{lem:Hdm}.
\end{proof}

\begin{lemma}    \label{lem:mainsym}    \samepage
Let $A$ denote a nonnegative matrix in $\matR$.
Then the following are equivalent for $0 \leq s,t \leq d$.
\begin{itemize}
\item[\rm (i)]
The graph $\Gamma(A)$ is a bidirected path with endpoints $s$, $t$.
\item[\rm (ii)]
The matrix $A$ is symmetrizable, 
and $\partial(s,t)=d$ in $\Gamma(A)$.
\end{itemize}
\end{lemma}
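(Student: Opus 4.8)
The plan is to prove the two implications separately, using Lemmas~\ref{lem:sym}, \ref{lem:psym} and \ref{lem:AijAji} together with elementary properties of the directed distance function $\partial$ and of shortest directed paths.

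For (i)$\Rightarrow$(ii), assume $\Gamma(A)$ is a bidirected path with endpoints $s,t$. By the remarks preceding Theorem~\ref{thm:mainsym} there is a permutation matrix $\Lambda\in\matR$ such that $\Lambda A\Lambda^{-1}$ is irreducible tridiagonal; by Lemma~\ref{lem:sym} this matrix is symmetrizable, and then Lemma~\ref{lem:psym}, applied to the permutation matrix $\Lambda^{-1}$, shows that $A$ itself is symmetrizable. It remains to check $\partial(s,t)=d$ in $\Gamma(A)$. Writing the path as $s\leftrightarrow x_1\leftrightarrow\cdots\leftrightarrow x_{d-1}\leftrightarrow t$, the only directed walk from $s$ to $t$ in $\Gamma(A)$ that never backtracks is $s\to x_1\to\cdots\to x_{d-1}\to t$, of length $d$, and no shorter directed walk exists; hence $\partial(s,t)=d$.

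For (ii)$\Rightarrow$(i), assume $A$ is symmetrizable with $\partial(s,t)=d$. Choose a shortest directed path $s=v_0\to v_1\to\cdots\to v_d=t$ in $\Gamma(A)$. A shortest directed path cannot repeat a vertex, so $\{v_i\}_{i=0}^d=\{0,1,\ldots,d\}$. Since $A_{v_{i-1},v_i}\neq 0$ for $1\le i\le d$, Lemma~\ref{lem:AijAji} gives $A_{v_i,v_{i-1}}\neq 0$ as well, so $\Gamma(A)$ contains the bidirected path $v_0\leftrightarrow v_1\leftrightarrow\cdots\leftrightarrow v_d$ with endpoints $s,t$. The crux is to rule out any further edge. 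Suppose $v_i\to v_j$ is an edge of $\Gamma(A)$ with $|i-j|\ge 2$; by Lemma~\ref{lem:AijAji} we may assume $i<j$. Then $v_0\to\cdots\to v_i\to v_j\to\cdots\to v_d$ is a directed path from $s$ to $t$ of length $d+1-(j-i)\le d-1$, contradicting $\partial(s,t)=d$. Hence the edge set of $\Gamma(A)$ is exactly that of the bidirected path with endpoints $s,t$, which is what we wanted.

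The only real subtlety — and thus the main obstacle, such as it is — lies in that last step: one must observe that minimality of the chosen path forces the Hamiltonian path to have no chords, and recognize that symmetrizability (via Lemma~\ref{lem:AijAji}) is precisely what allows a ``backward'' chord $v_i\to v_j$ with $i>j$ to be converted into a ``forward'' shortcut. The remaining steps are routine bookkeeping about conjugation by permutation matrices and about the distance function.
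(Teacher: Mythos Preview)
Your proof is correct and follows the same route as the paper. The (i)$\Rightarrow$(ii) direction matches the paper's use of Lemmas~\ref{lem:sym} and~\ref{lem:psym} (the paper also pauses to note that $\Lambda A\Lambda^{-1}$ inherits nonnegativity from $A$ before invoking Lemma~\ref{lem:sym}, which you leave implicit), and your detailed (ii)$\Rightarrow$(i) argument via a shortest Hamiltonian path and Lemma~\ref{lem:AijAji} is exactly the ``routine'' verification the paper alludes to but does not spell out.
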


\begin{proof}
(i)$\Rightarrow$(ii):
We first show that $A$ is symmetrizable.
By the observation above Theorem \ref{thm:mainsym},
there exists a permutation matrix $\Lambda \in \matR$ such that 
$\Lambda A \Lambda^{-1}$ is irreducible tridiagonal.
We assume $A$ is nonnegative so $\Lambda A \Lambda^{-1}$ is nonnegative.
So $\Lambda A \Lambda^{-1}$ is symmetrizable in vew of  Lemma \ref{lem:sym}.
Now $A$ is symmetrizable by Lemma \ref{lem:psym}.
By construction $\partial(s,t)=d$ in $\Gamma(A)$.

(ii)$\Rightarrow$(i):
Routine using Lemma \ref{lem:AijAji}.
\end{proof}

\begin{proofof}{Theorem \ref{thm:mainsym}}
(i)$\Rightarrow$(ii):
$A$ is symmetrizable by Lemma \ref{lem:mainsym}, and $A$ is
diagonalizable by Lemma \ref{lem:symdiag}.
By Lemma \ref{lem:mainsym}, $\partial(s,t)=d$ in $\Gamma(A)$.
The result follows in view of Theorem \ref{thm:main}.

(ii)$\Rightarrow$(i):
By Theorem \ref{thm:main} $\partial(s,t)=d$ in $\Gamma(A)$.
By this and Lemma \ref{lem:mainsym} the graph $\Gamma(A)$ is
a bidirected path with endpoints $s$, $t$.
\end{proofof}

\section{Symmetric association schemes}      \label{sec:assoc}

\indent
In this section we review some definitions and basic concepts concerning 
symmetric association schemes.
For more information we refer the reader to \cite{BI,BCN,T:subconst1}.

A {\em $d$-class symmetric association scheme} is a pair
${\cal X} = (X,\{R_i\}_{i=0}^d)$,
where $X$ is a finite nonempty set and $\{R_i\}_{i=0}^d$ are 
nonempty subsets of $X \times X$ that satisfy
\begin{itemize}
\item[(i)]
$R_0 = \{(x,x) \,|\, x \in X\}$;
\item[(ii)]
$X \times X = R_0 \cup R_1 \cup \cdots \cup R_d\;\;$ (disjoint union);
\item[(iii)]
$R_i^t=R_i$ for $0 \leq i \leq d$, where 
$R_i^t = \{(y,x) \,|\, (x,y) \in R_i\}$;
\item[(iv)]
there exist integers $p^h_{ij}$ $(0 \leq h,i,j \leq d)$ such that,
for every $(x,y) \in R_h$,
\[
 p^h_{ij}=|\{z \in X \,|\, (x,z) \in R_i, \; (z,y) \in R_j\}|.
\]
\end{itemize}
The parameters $p^h_{ij}$ are called the {\em intersection numbers}
of $\cal X$. 

From now on let ${\cal X}=(X, \{R_i\}_{i=0}^d)$ denote a 
$d$-class symmetric association scheme.
Observe by (iii) that $p^h_{ij}=p^h_{ji}$ for $0 \leq h,i,j \leq d$.
For $0 \leq i \leq d$ define $k_i = p^0_{ii}$, and observe
\begin{align*}
  k_i &= |\{y \in X \,|\, (x,y) \in R_i\}|
   & & (x \in X).
\end{align*}
Note that $k_i>0$.
By \cite[Proposition II.2.2]{BI},
\begin{align}   \label{eq:khphij}
 k_h p^h_{ij} &= k_j p^j_{ih}  & & (0 \leq h,i,j \leq d).
\end{align}

We recall the Bose-Mesner algebra of $\cal X$.
Let $\matX$ denote the $\R$-algebra consisting of the matrices whose 
rows and columns are indexed by $X$ and whose entries are in $\R$.
For $0 \leq i \leq d$ let $A_i$ denote the matrix in $\matX$ 
with $(x,y)$-entry
\begin{align*}
  (A_i)_{x,y} &=
   \begin{cases}
     1 & \text{ if $(x,y) \in R_i$} \\
     0 & \text{ if $(x,y) \not\in R_i$}
   \end{cases}
   & & (x,y \in X).
\end{align*}
We call $\{A_i\}_{i=0}^d$ the {\em adjacency matrices} of $\cal X$.
Note that $A_0 = I$, where $I$ denotes the identity matrix in $\matX$.
We call $A_0$ the {\em trivial} adjacency matrix.
Observe $A_i^t = A_i$ for $0 \leq i \leq d$. 
The matrices $\{A_i\}_{i=0}^d$ are linearly independent
since they have nonzero entries which are in disjoint positions.
Observe
\begin{align}
  A_iA_j &= \sum_{h=0}^d p^h_{ij} A_h
     & &     (0 \leq i,j \leq d).         \label{eq:AiAj}
\end{align}
By $p^h_{ij}=p^h_{ji}$ we find $A_iA_j = A_jA_i$ for $0 \leq i,j \leq d$.
Using these facts we find $\{A_i\}_{i=0}^d$ is a basis for a
commutative subalgebra $M$ of $\matX$.
We call $M$ the {\em Bose-Mesner algebra} of $\cal X$.

By \cite[Section II.2.3]{BI} $M$ has a second basis
$\{E_i\}_{i=0}^d$ such that
(i) $E_0 = |X|^{-1}J$;
(ii) $I=\sum_{i=0}^d E_i$;
(iii) $E_i^t = E_i$ $(0 \leq i \leq d)$;
(iv) $E_iE_j = \delta_{i,j}E_i$ $(0 \leq i,j \leq d)$.
We call $\{E_i\}_{i=0}^d$ the {\em primitive idempotents} of $\cal X$.
We call $E_0$ the {\em trivial} primitive idempotent.
For $0 \leq i \leq d$ let $m_i$ denote the rank of $E_i$.
Note that $m_i>0$.

We recall the matrices $P$ and $Q$.
We mentioned above that $\{A_i\}_{i=0}^d$ and $\{E_i\}_{i=0}^d$ are
bases for $M$. 
Define $P \in \matR$ such that
\begin{align}   
 A_j &= \sum_{i=0}^d P_{ij} E_i & & (0 \leq j \leq d). \label{eq:Ai}
\end{align}
Define $Q \in \matR$ such that
\begin{align}
 E_j &= |X|^{-1} \sum_{i=0}^d Q_{ij} A_i
                               & & (0 \leq j \leq d). \label{eq:Ei}
\end{align}
Observe that $PQ=QP = |X|I$. 
Setting $j=0$ and $A_0=I$ in \eqref{eq:Ai} we find $P_{i0}=1$
for $0 \leq i \leq d$.
Setting $j=0$ and $E_0 = |X|^{-1}J$ in \eqref{eq:Ei} we find
$Q_{i0}=1$ for $0 \leq i \leq d$.

We recall the $P$-polynomial property.
Let $\{A_i\}_{i=1}^d$ denote an ordering of the nontrivial adjacency
matrices of $\cal X$.
This ordering is said to be {\em $P$-polynomial} whenever
for $0 \leq i,j \leq d$ the intersection number $p^1_{ij}$ is zero
if $|i-j|>1$ and nonzero if $|i-j|=1$.
Let $A$ denote a nontrivial adjacency matrix of $\cal X$.
We say $\cal X$ is {\em $P$-polynomial relative to $A$}
whenever there exists a $P$-polynomial ordering $\{A_i\}_{i=1}^d$ of
the nontrivial adjacency matrices such that $A_1=A$.
In this case we call $A_d$ the {\em last adjacency matrix} in this
$P$-polynomial structure.

We recall the Krein parameters.
Let $\circ$ denote the entrywise product in $\matX$.
Observe $A_i \circ A_j = \delta_{i,j}A_i$ for $0 \leq i,j \leq d$,
so $M$ is closed under $\circ$.
Thus there exist
$q^h_{ij} \in \R$ $(0 \leq h,i,j \leq d)$ such that 
\begin{align}  
  E_i \circ E_j &= |X|^{-1} \sum_{h=0}^d q^h_{ij} E_h
                & & (0 \leq i,j \leq d).           \label{eq:EicircEj}
\end{align}
The parameters $q^h_{ij}$ are called the {\em Krein parameters}
of $\cal X$.
By \cite[Theorem II.3.8]{BI} the Krein parameters are nonnegative.
By \eqref{eq:EicircEj} we have $q^h_{ij}=q^h_{ji}$ for $0 \leq h,i,j \leq d$.
Setting $j=0$ and $E_0=|X|^{-1}J$ in \eqref{eq:EicircEj} we find
$q^h_{i0} = \delta_{h,i}$ for $0 \leq h,i \leq d$.
By \cite[Proposition II.3.7]{BI},
\begin{align}   \label{eq:mhqhij}
 m_h q^h_{ij} &= m_j q^j_{ih}  & & (0 \leq h,i,j \leq d).
\end{align}

We recall the $Q$-polynomial property.
Let $\{E_i\}_{i=1}^d$ denote an ordering of the nontrivial primitive
idempotents of $\cal X$.
This ordering is said to be {\em $Q$-polynomial} whenever
for $0 \leq i,j \leq d$ the Krein parameter $q^1_{ij}$ is zero if
$|i-j|>1$ and nonzero if $|i-j|=1$.
Let $E$ denote a nontrivial primitive idempotent of $\cal X$.
We say $\cal X$ is {\em $Q$-polynomial relative to $E$}
whenever there exists a $Q$-polynomial ordering $\{E_i\}_{i=1}^d$ of
the nontrivial primitive idempotents such that $E_1=E$.
In this case we call $E_d$ the {\em last primitive idempotent} in this
$Q$-polynomial structure.

We recall the dual Bose-Mesner algebra.
For the rest of the paper fix $x \in X$.
For $0 \leq i \leq d$ let $E^*_i$ denote the diagonal matrix in $\matX$
with $(y,y)$-entry
\begin{align*}
 (E^*_i)_{yy} &=
  \begin{cases}
   1 & \text{ if $(x,y) \in R_i$}  \\
   0 & \text{ if $(x,y) \not\in R_i$}
  \end{cases}
   & & (y \in X).
\end{align*}
For $y \in X$ the $(y,y)$-entry of $E^*_i$ coincides with the
$(x,y)$-entry of $A_i$.
Observe
$E^*_i E^*_j = \delta_{i,j} E^*_i$ $(0 \leq i,j \leq d)$ and
$I=\sum_{i=0}^d E^*_i$.  
We call $\{E^*_i\}_{i=0}^d$ the {\em dual primitive idempotents}
of $\cal X$.
By the above comments $\{E^*_i\}_{i=0}^d$ is a basis for a
commutative subalgebra ${M}^*$ of $\matX$.
We call ${M}^*$ the {\em dual Bose-Mesner algebra} of $\cal X$.
For $0 \leq i \leq d$ let $A^*_i$ denote the diagonal matrix in $\matX$
with $(y,y)$-entry $|X|(E_i)_{x,y}$ for $y \in X$.
We call $\{A^*_i\}_{i=0}^d$ the {\em dual adjacency matrices} of $\cal X$.
Using \eqref{eq:Ei},
\begin{align}
 A^*_j &= \sum_{i=0}^d Q_{ij} E^*_i  
             & & (0\leq j \leq d).                \label{eq:Asi}
\end{align}
Using \eqref{eq:Ai},
\begin{align}
 E^*_j &= |X|^{-1} \sum_{i=0}^d P_{ij} A^*_i
             & & (0 \leq j \leq d).              \label{eq:Esi}
\end{align}
Using \eqref{eq:EicircEj},
\begin{align}
 A^*_iA^*_j &= \sum_{h=0}^d q^h_{ij} A^*_h  &  
             & (0 \leq i,j \leq d).          \label{eq:AsiAsj}
\end{align}

For $0 \leq i,j \leq d$ the scalar $P_{ij}$ (resp. $Q_{ij}$)
is known as the {\em eigenvalue} of $A_j$ for $E_i$
(resp. {\em dual eigenvalue} of $E_j$ for $A_i$).

\section{The subconstituent algebra and its primary module}
\label{sec:rho}

\indent
We continue to discuss the symmetric association scheme
${\cal X} = (X,\{R_i\}_{i=0}^d)$ from Section \ref{sec:assoc}. 
Let $\R^X$ denote the vector space over $\R$ consisting of
column vectors with entries in $\R$ and coordinates indexed by $X$.
Observe that $\matX$ acts on $\R^X$ by left multiplication.
For all $y \in X$ let $\hat{y}$ denote the vector in $\R^X$
that has $y$-coordinate $1$ and all other coordinates $0$.
Note that $\{\hat{y} \,|\, y \in X\}$ is a basis for $\R^X$.
Let $T = T(x)$ denote the subalgebra of $\matX$ generated by 
${M}$ and ${M}^*$.
We call $T$ the {\em subconstituent algebra} of $\cal X$ with respect to $x$
\cite[Definition 3.3]{T:subconst1}.
We now describe a certain irreducible $T$-module known as the primary module. 
Let $\bb = \sum_{y \in X} \hat{y}$ denote the ``all $1$'s'' vector in $\R^X$.
By construction, for $0 \leq i \leq d$ we have $E^*_i \bb = A_i \hx$ and 
$A^*_i \bb = |X|E_i \hx$.
Therefore ${M} \hx = {M}^* \bb$. 
Denote this common space by $W$ and observe that $W$ is a $T$-module.
This $T$-module is said to be {\it primary}. 
The $T$-module $W$ is irreducible by \cite[Lemma 3.6]{T:subconst1}.
We now describe two bases for $W$.
For $0 \leq i \leq d$ define
\begin{align}     \label{eq:1i}
 \bb_i &= E^*_i \bb = A_i \hx, & \bb^*_i &= A^*_i \bb = |X|E_i \hx.
\end{align}
Then each of $\{\bb_i\}_{i=0}^d$ and $\{\bb^*_i\}_{i=0}^d$ is a
basis for $W$.
We now describe the transition matrices between these bases.
Using \eqref{eq:Ai} and \eqref{eq:1i} we find 
\begin{align}   \label{eq:b1i}
 \bb_j &= |X|^{-1} \sum_{i=0}^d P_{ij} \bb^*_i & & (0 \leq j \leq d).
\end{align}
By \eqref{eq:b1i} and $PQ=|X|I$,
\begin{align}   \label{eq:b1si}
 \bb^*_j &= \sum_{i=0}^d Q_{ij} \bb_i & & (0 \leq j \leq d).
\end{align}

\medskip

\begin{definition}    \label{def:rho}    \samepage
For all $B \in T$ let $\rho(B) \in \matR$ denote the matrix that
represents $B$ with respect to the basis $\{\bb_i\}_{i=0}^d$.
Thus
\begin{align}       \label{eq:MAj}
 B \bb_j &= \sum_{i=0}^d \rho(B)_{ij} \bb_i  & & (0 \leq j \leq d).
\end{align}
This defines an $\R$-algebra homomorphism $\rho: T \to \matR$.
\end{definition}

\begin{definition}   \label{def:rhos}   \samepage
For all $B \in T$ let $\rho^*(B) \in \matR$ denote the matrix that
represents $B$ with respect to the basis $\{\bb^*_i\}_{i=0}^d$. 
Thus
\begin{align}       \label{eq:MAsj}
 B \bb^*_j &= \sum_{i=0}^d \rho^*(B)_{ij} \bb^*_i  & & (0 \leq j \leq d).
\end{align}
This defines an $\R$-algebra homomorphism $\rho^* : T \to \matR$.
\end{definition}

\begin{lemma}    \label{lem:trans}  \samepage
The following hold for all $B \in T$.
\begin{itemize}
\item[\rm (i)]
 $P \rho(B) P^{-1} = \rho^*(B)$.
\item[\rm (ii)]
 $Q \rho^*(B) Q^{-1} = \rho(B)$.
\end{itemize}
\end{lemma}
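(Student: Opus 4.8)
The plan is to prove Lemma \ref{lem:trans} directly from the definitions of $\rho$ and $\rho^*$ together with the change-of-basis formulas \eqref{eq:b1i} and \eqref{eq:b1si}. The key observation is that $P$ (up to the scalar $|X|^{-1}$) is the transition matrix from the basis $\{\bb^*_i\}_{i=0}^d$ to $\{\bb_i\}_{i=0}^d$, and $Q$ is its inverse (up to the reciprocal scalar), since $PQ=QP=|X|I$. So this is the standard fact that conjugating the matrix of a linear operator by a transition matrix produces the matrix of the same operator with respect to the other basis.

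First I would fix $B \in T$ and show (i). The cleanest route is to compute $B\bb^*_j$ in two ways. On one hand, $B\bb^*_j = \sum_i \rho^*(B)_{ij}\bb^*_i$ by Definition \ref{def:rhos}. On the other hand, expand $\bb^*_j = \sum_k Q_{kj}\bb_k$ using \eqref{eq:b1si}, apply $B$ and use \eqref{eq:MAj} to get $B\bb^*_j = \sum_k Q_{kj}\sum_i \rho(B)_{ik}\bb_i = \sum_i \bigl(\sum_k \rho(B)_{ik}Q_{kj}\bigr)\bb_i$, then rewrite each $\bb_i$ via \eqref{eq:b1i} as $|X|^{-1}\sum_\ell P_{\ell i}\bb^*_\ell$. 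Collecting the coefficient of $\bb^*_\ell$ and matching with the first expression (using that $\{\bb^*_i\}$ is a basis, hence the expansions are unique) gives $\rho^*(B)_{\ell j} = |X|^{-1}\sum_{i,k} P_{\ell i}\rho(B)_{ik}Q_{kj} = |X|^{-1}(P\rho(B)Q)_{\ell j}$. Since $Q = |X|P^{-1}$ this is exactly $(P\rho(B)P^{-1})_{\ell j}$, proving (i). Part (ii) then follows immediately by left-multiplying by $P^{-1}=|X|^{-1}Q$ and right-multiplying by $P=|X|^{-1}Q^{-1}$... more directly, rearranging (i) gives $\rho(B) = P^{-1}\rho^*(B)P = |X|^{-1}Q\,\rho^*(B)\,|X|Q^{-1}\cdot$ — being careful with scalars, $P^{-1} = |X|^{-1}Q$ and $P = |X|Q^{-1}$ would be wrong; rather from $PQ=|X|I$ we get $P^{-1} = |X|^{-1}Q$ and $(|X|^{-1}Q)^{-1} = |X|Q^{-1}$, but also $P = |X|Q^{-1}$, so $\rho(B) = (|X|^{-1}Q)\rho^*(B)(|X|Q^{-1})$. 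Hmm, that has stray scalars; the correct statement is simply that conjugation is insensitive to the scalar, since $(cS)X(cS)^{-1} = SXS^{-1}$. So $\rho(B) = P^{-1}\rho^*(B)P$, and writing $P^{-1}$ in terms of $Q$ as a nonzero scalar multiple of $Q$ shows $\rho(B) = Q\rho^*(B)Q^{-1}$, which is (ii).

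There is essentially no hard step here: the whole lemma is a bookkeeping exercise with transition matrices, and the only thing to be careful about is tracking the $|X|^{\pm 1}$ factors in \eqref{eq:b1i} versus \eqref{eq:b1si} and in the relation $PQ=|X|I$, and noting that these factors all cancel because conjugation by a matrix equals conjugation by any nonzero scalar multiple of it. If one wanted an even shorter argument, one could observe that both $\rho$ and $\rho^*$ are faithful $\R$-algebra representations of $T$ on $W$ (being matrices of the same operators in different bases), so there is a single invertible matrix $S$ with $\rho^*(B) = S\rho(B)S^{-1}$ for all $B$, and then identify $S$ with $P$ (up to scalar) by testing the formula on the elements $A_j \in M \subseteq T$ using \eqref{eq:Ai}, \eqref{eq:MAj}, and the diagonal form of $\rho^*(A_j)$ that follows from $A_j\bb^*_i = P_{ij}\bb^*_i$ (a consequence of $A_j E_i = P_{ij}E_i$ and $\bb^*_i = |X|E_i\hx$).
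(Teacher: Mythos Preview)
Your argument is correct and is exactly the paper's approach spelled out in detail: the paper proves (i) by citing \eqref{eq:b1i} and ``elementary linear algebra'' (i.e., the standard change-of-basis computation you carried out), and then derives (ii) from (i) together with $PQ=|X|I$. Your scalar bookkeeping in (ii) wanders a bit---note that $(|X|^{-1}Q)\rho^*(B)(|X|Q^{-1}) = Q\rho^*(B)Q^{-1}$ directly since the factors $|X|^{-1}$ and $|X|$ cancel---but the conclusion is fine.
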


\begin{proof}
(i):
By \eqref{eq:b1i} and elementary linear algebra.

(ii):
Follows from (i) and $PQ=|X|I$.
\end{proof}

\begin{lemma}    \label{lem:rhoAi}  \samepage
The following hold for $0 \leq h,i,j \leq d$.
\begin{itemize}
\item[(i)]
$\rho(A_i)$ has $(h,j)$-entry $p^h_{ij}$.
\item[(ii)]
$\rho^*(A^*_i)$ has $(h,j)$-entry $q^h_{ij}$.
\end{itemize}
\end{lemma}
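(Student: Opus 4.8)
The plan is to compute the two transition matrices directly from the multiplication rules in the association scheme, applied to the bases $\{\bb_j\}$ and $\{\bb^*_j\}$ of the primary module $W$. For part (i), I would start from the identity $\bb_j = A_j\hx$ from \eqref{eq:1i}, so that $A_i\bb_j = A_iA_j\hx$. Applying the Bose--Mesner multiplication rule \eqref{eq:AiAj} gives $A_iA_j\hx = \sum_{h=0}^d p^h_{ij}A_h\hx = \sum_{h=0}^d p^h_{ij}\bb_h$, again using \eqref{eq:1i}. Comparing this with the defining equation \eqref{eq:MAj} for $\rho$, namely $A_i\bb_j = \sum_{h=0}^d \rho(A_i)_{hj}\bb_h$, and using that $\{\bb_h\}_{h=0}^d$ is a basis of $W$, I read off $\rho(A_i)_{hj} = p^h_{ij}$, which is the claim.

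For part (ii) the argument is the dual one. Here I would start from $\bb^*_j = A^*_j\bb$ from \eqref{eq:1i}, so $A^*_i\bb^*_j = A^*_iA^*_j\bb$. Applying the dual Bose--Mesner multiplication rule \eqref{eq:AsiAsj} gives $A^*_iA^*_j\bb = \sum_{h=0}^d q^h_{ij}A^*_h\bb = \sum_{h=0}^d q^h_{ij}\bb^*_h$. Comparing with the defining equation \eqref{eq:MAsj} for $\rho^*$ and using that $\{\bb^*_h\}_{h=0}^d$ is a basis of $W$, I conclude $\rho^*(A^*_i)_{hj} = q^h_{ij}$.

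I do not anticipate a real obstacle here; the lemma is essentially a bookkeeping exercise that records the meaning of the representations $\rho$ and $\rho^*$ on the algebra generators $A_i \in M$ and $A^*_i \in M^*$. The only point requiring a modicum of care is making sure that the multiplication rules \eqref{eq:AiAj} and \eqref{eq:AsiAsj}, which hold as identities in $\matX$, may be applied after hitting the vectors $\hx$ and $\bb$ respectively — but this is immediate since those identities hold as matrix equations and $W \subseteq \R^X$ is a module for the relevant algebras. So the proof is short: invoke \eqref{eq:1i}, \eqref{eq:AiAj}, \eqref{eq:MAj} for (i), and \eqref{eq:1i}, \eqref{eq:AsiAsj}, \eqref{eq:MAsj} for (ii), and in each case extract the matrix entry by comparing coefficients in a basis of $W$.
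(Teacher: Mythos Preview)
Your proposal is correct and matches the paper's proof essentially step for step: the paper computes $A_i\bb_j = A_iA_j\hx = \sum_h p^h_{ij}A_h\hx = \sum_h p^h_{ij}\bb_h$ using \eqref{eq:1i} and \eqref{eq:AiAj}, then says part (ii) is similar. Your write-up is slightly more explicit (spelling out the dual computation and the comparison with \eqref{eq:MAj}, \eqref{eq:MAsj}), but the argument is the same.
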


\begin{proof}
(i):
Using \eqref{eq:AiAj} and \eqref{eq:1i} we argue
$A_i \bb_j = A_i A_j \hx = \sum_{h=0}^d p^h_{ij} A_h \hx 
 = \sum_{h=0}^d p^h_{ij} \bb_h$.

(ii):
Similar to the proof of (i).
\end{proof} 

\begin{lemma}     \label{lem:rhoAsi}  \samepage
The following hold for $0 \leq i \leq d$.
\begin{itemize}
\item[\rm (i)]
$\rho(A^*_i)$ is diagonal with $(j,j)$-entry $Q_{ji}$ for 
$0 \leq j \leq d$.
\item[\rm (ii)]
$\rho^*(A_i)$ is diagonal with $(j,j)$-entry $P_{ji}$
for $0 \leq j \leq d$.
\end{itemize}
\end{lemma}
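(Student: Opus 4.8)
The plan is to compute both matrices directly from the definitions, using the transition formulas between the two bases of the primary module together with Lemma~\ref{lem:rhoAi}. For part~(i), I would start from the fact that $A^*_i$ is a diagonal matrix in $\matX$ with $(y,y)$-entry $|X|(E_i)_{x,y}$, and recall the identity $A^*_i \bb = \bb^*_i = \sum_{j=0}^d Q_{ji} \bb_j$ from \eqref{eq:b1si} (equivalently \eqref{eq:Asi} together with \eqref{eq:1i}). More generally I need $A^*_i \bb_j$; the key point is that $A^*_i$ commutes with every $A^*_h$, so $A^*_i \bb_j = A^*_i A^*_j \bb = A^*_j A^*_i \bb = A^*_j \bb^*_i$. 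Since $\{\bb^*_h\}_{h=0}^d$ is an eigenbasis-type basis on which the $A^*_j$ act by the rule $A^*_j \bb^*_h$ is governed by the Krein parameters via \eqref{eq:AsiAsj}, I would instead observe more cleanly: by \eqref{eq:Asi}, $A^*_i \bb^*_h = Q_{hi}\bb^*_h$ is \emph{not} quite right — rather $A^*_j = \sum_h Q_{hj}E^*_h$ and $E^*_h \bb^*_k = \delta_{h,k}\bb^*_k$ (from \eqref{eq:1i}, since $\bb^*_k = A^*_k\bb$ lies in $E^*_k$-image... ), giving $A^*_j \bb^*_h = Q_{hj}\bb^*_h$.

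Let me restate the clean route. First establish the auxiliary fact that $E^*_h \bb^*_k = \delta_{h,k}\bb^*_k$: indeed $\bb^*_k = A^*_k \bb$, and the diagonal matrix $A^*_k$ has support exactly on those $y$ with $(x,y)\in R_k$, which is precisely the support of $E^*_k$; hence $E^*_h A^*_k = \delta_{h,k} A^*_k$, so $E^*_h \bb^*_k = \delta_{h,k}\bb^*_k$. Combining this with \eqref{eq:Asi} yields $A^*_i \bb^*_j = \sum_{h} Q_{hi} E^*_h \bb^*_j = Q_{ji}\bb^*_j$, so in the basis $\{\bb^*_j\}_{j=0}^d$ the operator $A^*_i$ is diagonal with $(j,j)$-entry $Q_{ji}$; that is, $\rho^*(A^*_i)$ is diagonal with $(j,j)$-entry $Q_{ji}$. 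Then part~(i) follows from Lemma~\ref{lem:trans}(ii): $\rho(A^*_i) = Q\,\rho^*(A^*_i)\,Q^{-1}$ is a conjugate of a diagonal matrix, but I must still argue it is itself diagonal — this is where the real content lies.

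So the heart of part~(i) is to show $\rho(A^*_i)$ is diagonal, not merely diagonalizable. For this I would instead compute $\rho(A^*_i)$ directly: from \eqref{eq:Asi}, $A^*_i \bb_j = \sum_h Q_{hi} E^*_h \bb_j$, and $E^*_h \bb_j = E^*_h E^*_j \bb = \delta_{h,j}\bb_j$ by \eqref{eq:1i}, so $A^*_i \bb_j = Q_{ji}\bb_j$. Comparing with \eqref{eq:MAj} we read off that $\rho(A^*_i)$ is diagonal with $(j,j)$-entry $Q_{ji}$, proving~(i). Part~(ii) is entirely dual: from \eqref{eq:b1i} one has $\bb_j = |X|^{-1}\sum_i P_{ij}\bb^*_i$, and the analogous computation $A_i \bb^*_j$ — using $A_i \bb^*_j = A_i A^*_j \bb$... wait, $A_i$ and $A^*_j$ need not commute, so here I would instead use \eqref{eq:Ai}: $A_i = \sum_h P_{hi} E_h$, and $E_h \bb^*_j = E_h |X| E_j \hx = |X|\delta_{h,j} E_j \hx = \delta_{h,j}\bb^*_j$, giving $A_i \bb^*_j = P_{ji}\bb^*_j$; comparing with \eqref{eq:MAsj} shows $\rho^*(A_i)$ is diagonal with $(j,j)$-entry $P_{ji}$.

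I do not expect a genuine obstacle here: once the orthogonality relations $E^*_h \bb_j = \delta_{h,j}\bb_j$ and $E_h \bb^*_j = \delta_{h,j}\bb^*_j$ are in hand (both immediate from \eqref{eq:1i} and the idempotent relations for $\{E^*_i\}$ and $\{E_i\}$), both parts drop out by expanding $A^*_i$, respectively $A_i$, in the appropriate idempotent basis. The only subtlety is bookkeeping: one must expand $A^*_i$ in terms of $\{E^*_h\}$ (via \eqref{eq:Asi}) rather than in terms of $\{A^*_h\}$, and likewise $A_i$ in terms of $\{E_h\}$ (via \eqref{eq:Ai}), and then apply the correct basis vector $\bb_j$ or $\bb^*_j$ so that the idempotents act diagonally. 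Lemma~\ref{lem:trans} can be used as an independent consistency check that the two diagonal matrices are conjugate via $P$, but it is not needed for the proof itself.
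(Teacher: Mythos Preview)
Your final direct computation --- expanding $A^*_i = \sum_h Q_{hi} E^*_h$ via \eqref{eq:Asi}, using $E^*_h \bb_j = E^*_h E^*_j \bb = \delta_{h,j}\bb_j$ to obtain $A^*_i\bb_j = Q_{ji}\bb_j$, and dually $A_i\bb^*_j = P_{ji}\bb^*_j$ via $A_i = \sum_h P_{hi}E_h$ and $E_h\bb^*_j = |X|E_hE_j\hx = \delta_{h,j}\bb^*_j$ --- is correct and is exactly the paper's argument (the paper compresses it to the one line $A^*_i\bb_j = A^*_i E^*_j\bb = Q_{ji}E^*_j\bb = Q_{ji}\bb_j$).

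One caution about your abandoned detour, since you may not have discarded it for the right reason: the claim that the diagonal matrix $A^*_k$ is supported exactly on those $y$ with $(x,y)\in R_k$ is false --- that describes $E^*_k$, whereas the $(y,y)$-entry of $A^*_k$ is $|X|(E_k)_{x,y}$, generically nonzero for all $y$. Consequently $E^*_h A^*_k = \delta_{h,k}A^*_k$, $E^*_h\bb^*_k = \delta_{h,k}\bb^*_k$, and ``$\rho^*(A^*_i)$ is diagonal'' are all false in general; indeed Lemma~\ref{lem:rhoAi}(ii) already records $\rho^*(A^*_i)_{hj} = q^h_{ij}$.
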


\begin{proof}
(i):
Using \eqref{eq:1i} we argue
$A^*_i \bb_j = A^*_i E^*_j \bb = Q_{ji} E^*_j \bb
 = Q_{ji} \bb_j$.

(ii):
Similar to the proof of (i).
\end{proof}

\begin{lemma}      \label{lem:rhoEsi}   \samepage
The following hold for $0 \leq i \leq d$.
\begin{itemize}
\item[\rm (i)]
$\rho(E^*_i)$ has $(i,i)$-entry $1$ and all other entries $0$.
\item[\rm (ii)]
$\rho^*(E_i)$ has $(i,i)$-entry $1$ and all other entries $0$.
\end{itemize}
\end{lemma}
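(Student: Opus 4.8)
The plan is to prove both parts directly from the definitions of $\bb_i$, $\bb^*_i$, and the idempotent property, mirroring the proof of Lemma \ref{lem:rhoAsi}. For part (i): by Definition \ref{def:rho}, the $j^{\text{th}}$ column of $\rho(E^*_i)$ records $E^*_i \bb_j$ expanded in the basis $\{\bb_h\}_{h=0}^d$. But $\bb_j = E^*_j \bb$ by \eqref{eq:1i}, so $E^*_i \bb_j = E^*_i E^*_j \bb = \delta_{i,j} E^*_j \bb = \delta_{i,j} \bb_j$, using $E^*_i E^*_j = \delta_{i,j} E^*_i$. Hence $E^*_i \bb_j = 0$ for $j \neq i$ and $E^*_i \bb_i = \bb_i$, which says exactly that $\rho(E^*_i)$ has $(i,i)$-entry $1$ and all other entries $0$.

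For part (ii): by Definition \ref{def:rhos}, the $j^{\text{th}}$ column of $\rho^*(E_i)$ records $E_i \bb^*_j$ in the basis $\{\bb^*_h\}_{h=0}^d$. By \eqref{eq:1i} we have $\bb^*_j = |X| E_j \hx$, so $E_i \bb^*_j = |X| E_i E_j \hx = \delta_{i,j} |X| E_j \hx = \delta_{i,j} \bb^*_j$, using the idempotent relation $E_i E_j = \delta_{i,j} E_i$ recorded in Section \ref{sec:assoc}. Thus $E_i \bb^*_j = 0$ for $j \neq i$ and $E_i \bb^*_i = \bb^*_i$, giving the claim.

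There is essentially no obstacle here: the lemma is immediate once one writes the relevant basis vectors in their idempotent form and applies $E^*_i E^*_j = \delta_{i,j} E^*_i$ (respectively $E_i E_j = \delta_{i,j} E_i$). The only point requiring a line of care is to make sure one uses the "correct" description of each basis vector — $\bb_i = E^*_i \bb$ for part (i) and $\bb^*_i = |X| E_i \hx$ for part (ii) — rather than the other formula in \eqref{eq:1i}, so that the relevant projector acts cleanly. Since both statements are structurally identical, I would prove (i) in full and remark that (ii) is similar, exactly as the paper does for the preceding lemmas.
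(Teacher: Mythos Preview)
Your proposal is correct and follows essentially the same approach as the paper: the paper's proof of (i) is the one-line computation $E^*_i \bb_j = E^*_i E^*_j \bb = \delta_{i,j} E^*_i \bb = \delta_{i,j} \bb_i$ using \eqref{eq:1i}, and then (ii) is declared similar. Your write-up matches this exactly, with the harmless cosmetic difference that you write $\delta_{i,j}\bb_j$ in place of $\delta_{i,j}\bb_i$.
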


\begin{proof}
(i):
Using \eqref{eq:1i} we argue
$E^*_i \bb_j = E^*_i E^*_j \bb = \delta_{i,j} E^*_i \bb
 = \delta_{i,j} \bb_i$.

(ii):
Similar to the proof of (i).
\end{proof}

\begin{lemma}     \label{lem:rhoEi}   \samepage
The following hold for $0 \leq h,i,j \leq d$.
\begin{itemize}
\item[\rm (i)]
$\rho(E_i)$ has $(h,j)$-entry $|X|^{-1} Q_{hi} P_{ij}$.
\item[\rm (ii)]
$\rho^*(E^*_i)$ has $(h,j)$-entry $|X|^{-1} P_{hi}Q_{ij}$.
\end{itemize}
\end{lemma}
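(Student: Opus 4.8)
The plan is to derive both parts from the transition identities of Lemma \ref{lem:trans} together with the simple form of $\rho^*(E_i)$ and $\rho(E^*_i)$ recorded in Lemma \ref{lem:rhoEsi}. The only arithmetic fact needed beforehand is that $P^{-1}=|X|^{-1}Q$ and $Q^{-1}=|X|^{-1}P$; both are immediate from $PQ=QP=|X|I$.

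For part (i): by Lemma \ref{lem:rhoEsi}(ii) the matrix $\rho^*(E_i)$ has $(i,i)$-entry $1$ and all other entries $0$. Apply Lemma \ref{lem:trans}(ii) to get $\rho(E_i)=Q\,\rho^*(E_i)\,Q^{-1}$ and read off the $(h,j)$-entry: only the term with both summation indices equal to $i$ survives, so $\rho(E_i)_{hj}=Q_{hi}(Q^{-1})_{ij}=|X|^{-1}Q_{hi}P_{ij}$. Part (ii) is the mirror image, with the roles of $P,Q$ and of $\rho,\rho^*$ interchanged: Lemma \ref{lem:rhoEsi}(i) says $\rho(E^*_i)$ has $(i,i)$-entry $1$ and all other entries $0$, and Lemma \ref{lem:trans}(i) gives $\rho^*(E^*_i)=P\,\rho(E^*_i)\,P^{-1}$, whence $\rho^*(E^*_i)_{hj}=P_{hi}(P^{-1})_{ij}=|X|^{-1}P_{hi}Q_{ij}$.

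Alternatively one can argue directly in the primary module, in the style of the proofs of Lemmas \ref{lem:rhoAi}--\ref{lem:rhoEsi}: using \eqref{eq:1i}, \eqref{eq:Ai}, and $E_iE_k=\delta_{i,k}E_i$ one computes $E_i\bb_j=E_iA_j\hx=P_{ij}E_i\hx=|X|^{-1}P_{ij}\bb^*_i$, and then \eqref{eq:b1si} expands $\bb^*_i=\sum_{h=0}^d Q_{hi}\bb_h$, so the coefficient of $\bb_h$ is $|X|^{-1}Q_{hi}P_{ij}$; part (ii) is the same computation with stars, using \eqref{eq:Asi} and \eqref{eq:b1i}. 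Either way there is no serious obstacle here; the only thing to watch is keeping the two index orderings straight, namely the eigenvalue $P_{ij}$ versus the dual eigenvalue $Q_{hi}$, since transposing either factor would silently corrupt the final formula.
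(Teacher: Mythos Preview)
Your proof is correct and your primary argument is essentially identical to the paper's: both use Lemma \ref{lem:trans} together with Lemma \ref{lem:rhoEsi} and the relation $Q^{-1}=|X|^{-1}P$ (resp.\ $P^{-1}=|X|^{-1}Q$) to read off the entries. Your alternative direct computation in the primary module is also fine and simply unwinds the same conjugation identity at the level of basis vectors.
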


\begin{proof}
(i):
By Lemma \ref{lem:trans}(ii) 
$\rho(E_i) = Q \rho^*(E_i) Q^{-1}$.
By $PQ=|X|I$ we have $Q^{-1}=|X|^{-1}P$.
By Lemma \ref{lem:rhoEsi}(ii) $\rho^*(E_i)$ has $(i,i)$-entry
$1$ and all other entries $0$.
The result follows from these comments.

(ii):
Similar to the proof of (i).
\end{proof}

\begin{lemma}    \label{lem:Bisym}   \samepage
For $0 \leq i \leq d$ each of $\rho(A_i)$ and $\rho^*(A^*_i)$
is nonnegative and symmetrizable.
\end{lemma}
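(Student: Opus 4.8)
The plan is to prove that $\rho(A_i)$ is nonnegative and symmetrizable for $0 \leq i \leq d$; the statement for $\rho^*(A^*_i)$ will then follow by a parallel argument (interchanging the roles of $P$ and $Q$, of $p^h_{ij}$ and $q^h_{ij}$, and of $k_i$ and $m_i$), so I would prove the first half in detail and remark that the second half is analogous.

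For nonnegativity, I would invoke Lemma \ref{lem:rhoAi}(i): the $(h,j)$-entry of $\rho(A_i)$ is the intersection number $p^h_{ij}$, which is a cardinality and hence a nonnegative integer. So $\rho(A_i)$ is nonnegative with no further work.

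For symmetrizability, the natural candidate for the conjugating diagonal matrix comes from the relation \eqref{eq:khphij}, namely $k_h p^h_{ij} = k_j p^j_{ih}$ for all $h,i,j$. Fixing $i$ and writing $B = \rho(A_i)$, this says $k_h B_{hj} = k_j B_{jh}$ for all $h,j$. Define the diagonal matrix $K \in \matR$ with $(h,h)$-entry $k_h$; since each $k_h > 0$, $K$ is invertible. The relation $k_h B_{hj} = k_j B_{jh}$ is exactly the entrywise form of $KB = B^t K$. Now set $\Delta \in \matR$ to be the diagonal matrix with $(h,h)$-entry $\sqrt{k_h}$ (which makes sense since $k_h > 0$), so that $\Delta^2 = K$. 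Then from $KB = B^t K$ one checks that $\Delta B \Delta^{-1}$ equals its own transpose: indeed $(\Delta B \Delta^{-1})^t = \Delta^{-1} B^t \Delta = \Delta^{-1} (B^t K) K^{-1} \Delta = \Delta^{-1} (K B) K^{-1} \Delta = \Delta B \Delta^{-1}$, using that $K$ and $\Delta$ commute (both diagonal) and $\Delta^{-1} K = \Delta$. Hence $\rho(A_i)$ is symmetrizable. This mirrors the computation already carried out in the proof of Lemma \ref{lem:sym}.

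I do not anticipate a serious obstacle here; the only mild subtlety is making sure the symmetrization identity \eqref{eq:khphij} is applied with the indices matched correctly to the $(h,j)$-entry convention of Lemma \ref{lem:rhoAi}, and then observing that the dual statement for $\rho^*(A^*_i)$ uses \eqref{eq:mhqhij} together with $m_i > 0$ and Lemma \ref{lem:rhoAi}(ii) in exactly the same way.
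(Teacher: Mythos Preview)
Your proposal is correct and matches the paper's own proof essentially line for line: the paper also invokes Lemma~\ref{lem:rhoAi}(i) for nonnegativity, defines $\Delta$ diagonal with $(h,h)$-entry $\sqrt{k_h}$, and uses \eqref{eq:khphij} to verify that $\Delta\rho(A_i)\Delta^{-1}$ is symmetric, then remarks that the argument for $\rho^*(A^*_i)$ is similar via \eqref{eq:mhqhij} and Lemma~\ref{lem:rhoAi}(ii). The only cosmetic difference is that you introduce the intermediate matrix $K=\Delta^2$ explicitly, whereas the paper goes directly to $\Delta$.
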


\begin{proof}
Concerning $\rho(A_i)$, observe
it is nonnegative by Lemma \ref{lem:rhoAi}(i).
Define a diagonal matrix $\Delta \in \matR$ with $(i,i)$-entry 
$\sqrt{k_i}$ for $0 \leq i \leq d$.
Using \eqref{eq:khphij} we routinely find that $\Delta \rho(A_i) \Delta^{-1}$
is symmetric.
Therefore $\rho(A_i)$ is symmetrizable.
The proof for $\rho^*(A^*_i)$ is similar using \eqref{eq:mhqhij} and
Lemma \ref{lem:rhoAi}(ii).
\end{proof}

\medskip

We will need the following well-known facts.

\medskip

\begin{lemma}  {\rm \cite[Proposition III.1.1]{BI}} \label{lem:Ppoly}
\samepage
The following are equivalent.
\begin{itemize}
\item[\rm (i)]
The ordering $\{A_i\}_{i=1}^d$ is $P$-polynomial.
\item[\rm (ii)]
The matrix $\rho(A_1)$ is irreducible tridiagonal.
\item[\rm (iii)]
The graph $\Gamma(\rho(A_1))$ is the bidirected path
$0 \leftrightarrow 1 \leftrightarrow 2 \leftrightarrow \cdots
\leftrightarrow d$.
\end{itemize}
\end{lemma}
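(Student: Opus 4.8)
The plan is to reduce everything to the explicit description of $\rho(A_1)$ supplied by Lemma \ref{lem:rhoAi}(i): the $(h,j)$-entry of $\rho(A_1)$ equals the intersection number $p^h_{1j}$. Thus the content of (ii) and (iii) is entirely about which of the scalars $p^h_{1j}$ vanish, whereas the $P$-polynomial property in (i) is phrased in terms of the scalars $p^1_{hj}$; the first task is to bridge these two index patterns.

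First I would prove the key identity $p^h_{1j}=(k_1/k_h)\,p^1_{hj}$ for $0\le h,j\le d$. Using the symmetry $p^h_{ij}=p^h_{ji}$ we have $p^h_{1j}=p^h_{j1}$, and then applying \eqref{eq:khphij} (with the roles of the lower indices chosen so that the left-hand side is $k_h p^h_{j1}$) gives $k_h p^h_{j1}=k_1 p^1_{jh}$, whence $p^h_{1j}=(k_1/k_h)\,p^1_{hj}$ after one more use of $p^1_{jh}=p^1_{hj}$. Since $k_1>0$ and $k_h>0$, this identity shows that $p^h_{1j}=0$ if and only if $p^1_{hj}=0$; that is, the zero pattern of $\rho(A_1)$ is identical to the zero pattern of the table $(p^1_{hj})_{h,j}$.

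Next, for (i)$\Leftrightarrow$(ii): by definition the ordering $\{A_i\}_{i=1}^d$ is $P$-polynomial precisely when $p^1_{hj}=0$ for $|h-j|>1$ and $p^1_{hj}\neq 0$ for $|h-j|=1$. By the previous paragraph this holds if and only if the $(h,j)$-entry of $\rho(A_1)$ is zero for $|h-j|>1$ and nonzero for $|h-j|=1$, i.e. $\rho(A_1)$ is tridiagonal with every subdiagonal and superdiagonal entry nonzero; this is exactly the assertion that $\rho(A_1)$ is irreducible tridiagonal. (The diagonal entries $p^h_{1h}$ are unconstrained on both sides, consistent with the definition of tridiagonal.) Finally (ii)$\Leftrightarrow$(iii) is immediate from the elementary observation recorded just above Theorem \ref{thm:mainsym}, namely that a matrix $B\in\matR$ is irreducible tridiagonal if and only if $\Gamma(B)$ is the bidirected path $0 \leftrightarrow 1 \leftrightarrow 2 \leftrightarrow \cdots \leftrightarrow d$; applying this with $B=\rho(A_1)$ closes the loop.

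I do not expect a genuine obstacle: the lemma is essentially a dictionary between the combinatorics of the intersection numbers and the shape of the matrix $\rho(A_1)$. The one step that must be carried out carefully is the index bookkeeping in the identity $p^h_{1j}=(k_1/k_h)\,p^1_{hj}$, together with the remark that it is precisely the strict positivity of $k_1$ and each $k_h$ that transports the "zero versus nonzero" dichotomy faithfully between the two index conventions.
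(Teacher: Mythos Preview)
Your proof is correct and follows essentially the same route as the paper: use \eqref{eq:khphij} together with the symmetry $p^h_{ij}=p^h_{ji}$ (and the positivity of the $k_h$) to see that $p^1_{hj}$ and $p^h_{1j}$ vanish together, then invoke Lemma~\ref{lem:rhoAi}(i) and the graph/tridiagonal dictionary above Theorem~\ref{thm:mainsym}. Your write-up is simply a more explicit version of the paper's two-line argument (and you correctly cite Lemma~\ref{lem:rhoAi}(i), whereas the paper's reference to part (ii) there appears to be a typo).
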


\begin{proof}
In the definition of $P$-polynomial,
$p^1_{ij}$ is nonzero if and only if $p^i_{1j}$ is nonzero
by \eqref{eq:khphij} and since $k_h \neq 0$.
Now we obtain the result using Lemma \ref{lem:rhoAi}(ii).
\end{proof}

\begin{lemma}  {\rm \cite[Section III.1]{BI}}  \label{lem:Qpoly}
\samepage
The following are equivalent.
\begin{itemize}
\item[\rm (i)]
The ordering $\{E_i\}_{i=1}^d$ is $Q$-polynomial.
\item[\rm (ii)]
The matrix $\rho^*(A^*_1)$ is irreducible tridiagonal.
\item[\rm (iii)]
The graph $\Gamma(\rho^*(A^*_1))$ is the bidirected path
$0 \leftrightarrow 1 \leftrightarrow 2 \leftrightarrow \cdots
\leftrightarrow d$.
\end{itemize}
\end{lemma}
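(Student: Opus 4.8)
The plan is to dualise the proof of Lemma~\ref{lem:Ppoly}, replacing the intersection-number identity \eqref{eq:khphij} by its Krein-parameter analogue \eqref{eq:mhqhij}, and replacing Lemma~\ref{lem:rhoAi}(i) by Lemma~\ref{lem:rhoAi}(ii). First I would dispose of the equivalence of (ii) and (iii): a matrix in $\matR$ is irreducible tridiagonal precisely when its associated graph $\Gamma(\cdot)$ is the bidirected path $0 \leftrightarrow 1 \leftrightarrow \cdots \leftrightarrow d$, by the observation recorded just above Theorem~\ref{thm:mainsym}. So everything reduces to the equivalence of (i) and (ii).

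For (i)$\Leftrightarrow$(ii) I would unwind both sides in terms of Krein parameters. By Lemma~\ref{lem:rhoAi}(ii) the $(h,j)$-entry of $\rho^*(A^*_1)$ is the Krein parameter $q^h_{1j}$, whereas the $Q$-polynomial property for the ordering $\{E_i\}_{i=1}^d$ is phrased in terms of the $q^1_{ij}$. To bridge the two I would apply \eqref{eq:mhqhij} (with the triples $(h,1,j)$ and $(1,h,j)$) together with the symmetry $q^h_{ij}=q^h_{ji}$ to obtain $m_h\,q^h_{1j} = m_j\,q^j_{1h} = m_1\,q^1_{hj}$. Since each multiplicity $m_i$ is positive, this shows $q^h_{1j}=0$ if and only if $q^1_{hj}=0$ for $0\le h,j\le d$. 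Consequently $\rho^*(A^*_1)$ is irreducible tridiagonal exactly when $q^1_{ij}$ is zero for $|i-j|>1$ and nonzero for $|i-j|=1$, which is precisely the definition of the $Q$-polynomial ordering $\{E_i\}_{i=1}^d$. This yields (i)$\Leftrightarrow$(ii) and finishes the proof.

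I do not expect a real obstacle here; this is a short argument parallel to Lemma~\ref{lem:Ppoly}. The only point demanding a little care is the index bookkeeping in the two applications of \eqref{eq:mhqhij}, so that the entry $q^h_{1j}$ of $\rho^*(A^*_1)$ is ultimately tied to $q^1_{hj}$ rather than merely to $q^j_{1h}$.
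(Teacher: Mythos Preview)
Your proposal is correct and follows essentially the same route as the paper: use \eqref{eq:mhqhij} together with $m_h>0$ to show $q^h_{1j}=0 \Leftrightarrow q^1_{hj}=0$, and then invoke Lemma~\ref{lem:rhoAi}(ii) to identify the entries of $\rho^*(A^*_1)$. Your version is just slightly more explicit in separating off (ii)$\Leftrightarrow$(iii) and in spelling out the index bookkeeping, but the argument is the same.
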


\begin{proof}
In the definition of $Q$-polynomial,
$q^1_{ij}$ is nonzero if and only if $q^i_{1j}$ is nonzero
by \eqref{eq:mhqhij} and since $m_h \neq 0$.
Now we obtain the result using Lemma \ref{lem:rhoAi}(ii).
\end{proof}

\begin{lemma}   \label{lem:GrhoAi}  \samepage
For $1 \leq i \leq d$ consider the graph $\Gamma(\rho(A_i))$
and $\Gamma(\rho^*(A^*_i))$. 
In either case,
\begin{itemize}
\item[\rm (i)]
$h \rightarrow j$ if and only if $j \rightarrow h$  $(0 \leq h,j\leq d)$.
\item[\rm (ii)]
 $0 \rightarrow i$.
\item[\rm (iii)]
$0 \not\rightarrow h$ if $h \not= i$ $(0 \leq h \leq d)$.
\end{itemize}
\end{lemma}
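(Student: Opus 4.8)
The plan is to handle the graphs $\Gamma(\rho(A_i))$ and $\Gamma(\rho^*(A^*_i))$ in parallel, since the needed information about their entries is provided by Lemma \ref{lem:rhoAi}(i) and (ii) respectively. For $\Gamma(\rho(A_i))$ the $(h,j)$-entry is $p^h_{ij}$, and for $\Gamma(\rho^*(A^*_i))$ it is $q^h_{ij}$; in both cases this is a nonnegative integer (resp. nonnegative real), so the arrow $h \rightarrow j$ is present exactly when this parameter is nonzero. Thus all three claims reduce to statements about the vanishing pattern of $p^h_{ij}$ and $q^h_{ij}$ for fixed $i$ with $1 \leq i \leq d$.

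For part (i), I would use the symmetry $p^h_{ij}=p^h_{ji}$ together with the relation \eqref{eq:khphij}, namely $k_h p^h_{ij} = k_j p^j_{ih}$. Since $k_h>0$ and $k_j>0$, we get $p^h_{ij}\neq 0$ iff $p^j_{ih}\neq 0$, which is precisely $h \rightarrow j$ iff $j \rightarrow h$ in $\Gamma(\rho(A_i))$. The same argument for $q^h_{ij}$ uses \eqref{eq:mhqhij} and $m_h,m_j>0$. Alternatively, one can simply invoke Lemma \ref{lem:Bisym}: each of $\rho(A_i)$, $\rho^*(A^*_i)$ is symmetrizable, and Lemma \ref{lem:AijAji} then gives the biconditional directly. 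I would use this latter route as it is cleanest.

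For parts (ii) and (iii), I would look at row $0$ of the relevant matrix. The $(0,j)$-entry of $\rho(A_i)$ is $p^0_{ij}$, which by definition of the intersection numbers equals $k_i$ times a Kronecker delta: concretely, $p^0_{ij}$ counts, for $(x,y)\in R_0$ (i.e.\ $x=y$), the number of $z$ with $(x,z)\in R_i$ and $(z,x)\in R_j$; since $R_j^t=R_j$ this is nonzero iff $j=i$, in which case it equals $k_i>0$. Hence $0 \rightarrow i$ and $0 \not\rightarrow h$ for $h\neq i$, giving (ii) and (iii) for $\Gamma(\rho(A_i))$. The argument for $\Gamma(\rho^*(A^*_i))$ is identical using $q^0_{ij}$ in place of $p^0_{ij}$: indeed $q^h_{i0}=\delta_{h,i}$ was recorded in Section \ref{sec:assoc}, and by the symmetry $q^0_{ij}=q^0_{ji}$ together with $m_h q^h_{ij}=m_j q^j_{ih}$ one deduces $q^0_{ij}$ is nonzero iff $j=i$. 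I expect no real obstacle here; the only point requiring a moment's care is correctly reading off which parameter sits in the $(0,j)$ position of each representation matrix, which is exactly what Lemma \ref{lem:rhoAi} supplies.
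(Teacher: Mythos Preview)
Your proposal is correct and follows essentially the same approach as the paper: part (i) via Lemma~\ref{lem:Bisym} together with Lemma~\ref{lem:AijAji}, and parts (ii)--(iii) via Lemma~\ref{lem:rhoAi} together with the fact that $p^0_{ih}$ and $q^0_{ih}$ vanish unless $h=i$. The paper's proof is terser, simply citing those lemmas and the identities $p^0_{ih}=\delta_{h,i}k_i$, $q^0_{ih}=\delta_{h,i}m_i$ (stated there up to the positive factors $k_i$, $m_i$), whereas you spell out the combinatorial reason for the first and derive the second from $q^h_{i0}=\delta_{h,i}$ via \eqref{eq:mhqhij}.
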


\begin{proof}
(i):
By Lemmas \ref{lem:AijAji} and \ref{lem:Bisym}.

(ii), (iii):
By Lemma \ref{lem:rhoAi} and since $p_{ih}^{0} = \delta_{h,i}$,
$q_{ih}^{0} = \delta_{h,i}$.
\end{proof}

\begin{lemma}    \label{lem:reltoAi}   \samepage
 For $1 \leq i \leq d$ the following are equivalent.
\begin{itemize}
\item[\rm (i)]
$\cal X$ is $P$-polynomial relative to $A_i$.
\item[\rm (ii)]
The graph $\Gamma(\rho(A_i))$ is a bidirected path.
\end{itemize}
Suppose {\rm (i)} and {\rm (ii)} hold. 
Then the above graph is
$0 \leftrightarrow i \leftrightarrow * \leftrightarrow * \cdots * 
 \leftrightarrow s$,
where $A_s$ is the last adjacency matrix in the $P$-polynomial structure.
\end{lemma}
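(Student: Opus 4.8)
The plan is to transport Lemma~\ref{lem:Ppoly} across a reordering of the adjacency matrices, using the remark above Theorem~\ref{thm:mainsym} that $\Gamma(B)$ is a bidirected path precisely when $\Lambda B\Lambda^{-1}$ is irreducible tridiagonal for some permutation matrix $\Lambda$. The one technical point to establish first is the following bookkeeping. Given a permutation $\sigma$ of $\{0,1,\dots,d\}$ with $\sigma(0)=0$, reordering the nontrivial adjacency matrices as $B_j=A_{\sigma(j)}$ $(1\le j\le d)$ produces a $d$-class symmetric association scheme with adjacency matrices $B_0=I,B_1,\dots,B_d$, whose associated representation $\rho'$ of $T$ (built from the basis $\{B_j\hx\}_{j=0}^d$ of $W$) is $\rho$ conjugated by the permutation matrix of $\sigma$; consequently $\Gamma(\rho'(B_1))$ is just $\Gamma(\rho(A_{\sigma(1)}))$ with vertex $j$ renamed $\sigma(j)$. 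Granting this, Lemma~\ref{lem:Ppoly} applied to the ordering $\{B_j\}_{j=1}^d$ says that $\{B_j\}_{j=1}^d$ is $P$-polynomial if and only if $\rho'(B_1)$ is irreducible tridiagonal.

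I would also record that in $\Gamma(\rho(A_i))$ the vertex $0$ is adjacent to $i$ and to no other vertex, which is immediate from Lemma~\ref{lem:GrhoAi}(i)--(iii). Hence whenever $\Gamma(\rho(A_i))$ is a bidirected path, $0$ must be an endpoint of it and $i$ the adjacent vertex, so the path has the form $0\leftrightarrow i\leftrightarrow *\leftrightarrow\cdots\leftrightarrow s$ with $s$ its other endpoint; this yields the ``shape'' assertion and shows $s$ is determined by $\Gamma(\rho(A_i))$ alone.

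For (i)$\Rightarrow$(ii): take a $P$-polynomial ordering of the nontrivial adjacency matrices with first term $A_i$, write it as $B_j=A_{\sigma(j)}$ with $\sigma(0)=0$ and $\sigma(1)=i$, and set $s=\sigma(d)$, so that $A_s=B_d$ is the last adjacency matrix of the structure. By Lemma~\ref{lem:Ppoly}, $\rho'(B_1)$ is irreducible tridiagonal, so $\Gamma(\rho'(B_1))$ is the bidirected path $0\leftrightarrow 1\leftrightarrow\cdots\leftrightarrow d$; renaming vertices via $\sigma$ shows $\Gamma(\rho(A_i))$ is the bidirected path $0\leftrightarrow i\leftrightarrow *\leftrightarrow\cdots\leftrightarrow s$. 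For (ii)$\Rightarrow$(i): by the previous paragraph $\Gamma(\rho(A_i))$ is a bidirected path $0\leftrightarrow i\leftrightarrow x_2\leftrightarrow\cdots\leftrightarrow x_d$; let $\sigma$ be the permutation with $\sigma(0)=0$, $\sigma(1)=i$, $\sigma(j)=x_j$ $(2\le j\le d)$, so that $\Gamma(\rho'(B_1))$ is the path $0\leftrightarrow 1\leftrightarrow\cdots\leftrightarrow d$ and hence $\rho'(B_1)$ is irreducible tridiagonal. By Lemma~\ref{lem:Ppoly} the ordering $\{B_j=A_{\sigma(j)}\}_{j=1}^d$ is $P$-polynomial, and since $B_1=A_i$ this says $\cal X$ is $P$-polynomial relative to $A_i$, with last adjacency matrix $B_d=A_{x_d}$, i.e.\ $s=x_d$. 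The main obstacle is exactly the change-of-basis bookkeeping in the first paragraph; once that is in place the argument is a direct combination of Lemmas~\ref{lem:Ppoly} and~\ref{lem:GrhoAi} with the permutation characterization of bidirected paths.
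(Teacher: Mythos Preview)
Your proof is correct and follows essentially the same approach as the paper: the paper's proof is the single line ``Use Lemmas~\ref{lem:Ppoly} and~\ref{lem:GrhoAi},'' and what you have written is precisely the expanded version of that, making explicit the permutation-of-basis bookkeeping needed to apply Lemma~\ref{lem:Ppoly} to a reordered family $\{B_j\}$ and using Lemma~\ref{lem:GrhoAi} to pin down that $0$ is an endpoint adjacent to $i$. There is no substantive difference in strategy.
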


\begin{proof}
Use Lemmas \ref{lem:Ppoly} and \ref{lem:GrhoAi}.
\end{proof}

\begin{lemma}    \label{lem:reltoAsi}   \samepage
 For $1 \leq i \leq d$ the following are equivalent.
\begin{itemize}
\item[\rm (i)]
$\cal X$ is $Q$-polynomial relative to $E_i$.
\item[\rm (ii)]
The graph $\Gamma(\rho^*(A^*_i))$ is a bidirected path.
\end{itemize}
Suppose {\rm (i)} and {\rm (ii)} hold. 
Then the above graph is
$0 \leftrightarrow i \leftrightarrow * \leftrightarrow * \cdots * 
 \leftrightarrow s$,
where $E_s$ is the last primitive idempotent in the $Q$-polynomial structure.
\end{lemma}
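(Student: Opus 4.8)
The plan is to dualize the (one-line) proof of Lemma \ref{lem:reltoAi}: replace $\rho(A_i)$ by $\rho^*(A^*_i)$, and the $P$-polynomial translation of Lemma \ref{lem:Ppoly} by the $Q$-polynomial translation of Lemma \ref{lem:Qpoly}. Two ingredients do all the work. First, by Lemma \ref{lem:Qpoly}, a reindexing of the nontrivial primitive idempotents is $Q$-polynomial exactly when the matrix $\rho^*(A^*_1)$ for that reindexing is irreducible tridiagonal, and by the observation in the Introduction this happens exactly when the digraph of that matrix is the standard bidirected path $0\leftrightarrow 1\leftrightarrow\cdots\leftrightarrow d$. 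Second, by Lemma \ref{lem:GrhoAi}, the digraph $\Gamma(\rho^*(A^*_i))$ has bidirected edges, has $0\to i$, and has no edge $0\to h$ for $h\ne i$; so in $\Gamma(\rho^*(A^*_i))$ the vertex $0$ has exactly one neighbor, namely $i$.

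For (i)$\Rightarrow$(ii): assume ${\cal X}$ is $Q$-polynomial relative to $E_i$, so some $Q$-polynomial ordering of the nontrivial primitive idempotents has first term $E_i$. Let $\Lambda$ be the permutation matrix realizing this reindexing; it fixes the index $0$ and sends index $1$ to $i$. Because $\rho^*$ is computed in the basis $\{\bb^*_j\}_{j=0}^d$, which is permuted by the reindexing, the matrix representing $A^*_1$ in the reindexed basis is $\Lambda\rho^*(A^*_i)\Lambda^{-1}$; by Lemma \ref{lem:Qpoly} it is irreducible tridiagonal, so its digraph is the standard bidirected path, and relabelling the vertices back through $\Lambda$ shows $\Gamma(\rho^*(A^*_i))$ is a bidirected path.

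For (ii)$\Rightarrow$(i): assume $\Gamma(\rho^*(A^*_i))$ is a bidirected path. By the second ingredient above the vertex $0$ is an endpoint and its unique neighbor is $i$, so the path reads $0\leftrightarrow i\leftrightarrow v_2\leftrightarrow\cdots\leftrightarrow v_d$ for some listing $v_2,\dots,v_d$ of the remaining vertices. Let $\pi$ be the permutation with $\pi(0)=0$, $\pi(1)=i$, $\pi(k)=v_k$ for $2\le k\le d$, and $\Lambda$ the associated permutation matrix. Then $\Gamma(\Lambda\rho^*(A^*_i)\Lambda^{-1})$ is the standard bidirected path, hence $\Lambda\rho^*(A^*_i)\Lambda^{-1}$ is irreducible tridiagonal; by Lemma \ref{lem:Qpoly} the corresponding reindexing is $Q$-polynomial with first term $E_i$, so ${\cal X}$ is $Q$-polynomial relative to $E_i$. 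The last term of that ordering is $E_{v_d}$, so the last primitive idempotent is $E_s$ with $s=v_d$, and this is exactly the far endpoint of the path; thus whenever (i) and (ii) hold the graph has the displayed shape $0\leftrightarrow i\leftrightarrow *\leftrightarrow\cdots\leftrightarrow s$. (That $\rho^*(A^*_i)$ is nonnegative and symmetrizable, Lemma \ref{lem:Bisym}, is not needed for this lemma.)

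The only delicate point --- the ``hard part'', such as it is --- is keeping the reindexing bookkeeping consistent: one must note that reindexing the primitive idempotents permutes the basis $\{\bb^*_j\}$ the same way, so a $Q$-polynomial reindexing corresponds precisely to conjugating $\rho^*(A^*_i)$ by the matching permutation matrix, which is what Lemma \ref{lem:Qpoly} requires. With that correspondence in hand the argument is a verbatim transcription of the proof of Lemma \ref{lem:reltoAi}.
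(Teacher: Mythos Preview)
Your proof is correct and follows exactly the approach the paper takes: the paper's own proof is the single line ``Use Lemmas \ref{lem:Qpoly} and \ref{lem:GrhoAi}'', and you have simply unpacked the permutation bookkeeping that this sentence leaves implicit. Your observation that a reindexing of the nontrivial $E_j$ permutes the basis $\{\bb^*_j\}$ and hence conjugates $\rho^*(A^*_i)$ by the corresponding permutation matrix is precisely the content hidden in that one line.
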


\begin{proof}
Use Lemmas \ref{lem:Qpoly} and \ref{lem:GrhoAi}.
\end{proof}

\section{Proof of Theorems \ref{thm:Q} and \ref{thm:P}}
\label{sec:PQpoly}

\indent
For convenience we first prove Theorem \ref{thm:P}.

\medskip

\begin{proofof}{Theorem \ref{thm:P}}
Fix an ordering $\{A_i\}_{i=1}^d$ of the nontrivial adjacency matrices 
such that $A_1=B$, and let $C=A_s$.
For $0 \leq i \leq d$ the scalar $\th_i = P_{i1}$ is the eigenvalue
of $B$ for $E_i$, and $Q_{si}$ is the dual eigenvalue of $E_i$ for $C$.
For $0 \leq i \leq d$ define a polynomial $f_i \in \R[\lambda]$ by \eqref{eq:fi}.
Let the map $\rho$ be as in Definition \ref{def:rho}. 
By Lemma \ref{lem:Bisym} the matrix $\rho(B)$ is nonnegative,
so we can apply Theorem \ref{thm:mainsym} with $A=\rho(B)$. 
We will do this after a few comments.
Combining Lemma \ref{lem:trans}(i) and Lemma \ref{lem:rhoAsi}(ii) we find
$P\rho(B)P^{-1} = \text{diag}(\th_0,\th_1,\ldots,\th_d)$.
Therefore $\rho(B)$ is multiplicity-free if only if 
$\{\th_i\}_{i=0}^d$ are mutually distinct, and in this case $\rho(E_i)$
is the primitive idempotent of $\rho(B)$ for $\th_i$ $(0 \leq i \leq d)$.
For $0 \leq i \leq d$ the $(s,0)$-entry of $\rho(E_i)$ is given in 
Lemma \ref{lem:rhoEi}(i).
This entry is $|X|^{-1} Q_{si}$ since $P_{i0}=1$.

(i)$\Rightarrow$(ii):
By Lemma \ref{lem:reltoAi} the graph $\Gamma(\rho(B))$ is a bidirected
path with endpoints $s$, $0$.  
Therefore $A=\rho(B)$ satisfies Theorem \ref{thm:mainsym}(i) with $t=0$. 
Applying Theorem \ref{thm:mainsym} we draw two conclusions.
First, $\rho(B)$ is multiplicity-free, so 
$\{\th_i\}_{i=0}^d$ are mutually distinct.
Second, the $(s,0)$-entry of of $\rho(E_i)$ times $f_i(\th_i)$ is 
independent of $i$ for $0 \leq i \leq d$.
By this and our above comments, $f_i(\th_i)Q_{si}$ is independent of $i$ 
for $0 \leq i \leq d$.
Therefore  $f_i(\th_i)Q_{si}=f_0(\th_0)Q_{s0}$ for $0 \leq i \leq d$.
By this and since $Q_{s0}=1$, we find 
$Q_{si}=f_0(\th_0)f_i(\th_i)^{-1}$ for $0 \leq i \leq d$.
In other words, for $0 \leq i \leq d$ the dual eigenvalue of $E_i$ for $C$ 
is equal to \eqref{eq:qid}.

(ii)$\Rightarrow$(i):
Observe that $\rho(B)$ is symmetrizable by Lemma \ref{lem:Bisym}, and
multiplicity-free since $\{\th_i\}_{i=0}^d$ are mutually distinct.
For $0 \leq i \leq d$ the dual eigenvalue of $E_i$ for $C$ is
$Q_{si}$, and this is equal to $f_0(\th_0)f_i(\th_i)^{-1}$ by \eqref{eq:qid}.
By this and our above comments,
for $0 \leq i \leq d$ the $(s,0)$-entry of $\rho(E_i)$ is equal to 
$|X|^{-1} f_0(\th_0)f_i(\th_i)^{-1}$.
So the $(s,0)$-entry of $\rho(E_i)$ times $f_i(\th_i)$ is independent of $i$ 
for $0 \leq i \leq d$, and this common value is nonzero.
Therefore $\rho(B)$ satisfies Theorem \ref{thm:mainsym}(ii) with $t=0$.
Now by Theorem \ref{thm:mainsym},
the graph $\Gamma(\rho(B))$ is a bidirected path with endpoints $s$, $0$.
Now by Lemma \ref{lem:reltoAi} $\cal X$ is $P$-polynomial relative to $B$, 
and $C=A_s$ is the last adjacency matrix in this $P$-polynomial structure.
\end{proofof}

\medskip

The proof of Theorem \ref{thm:Q} is similar to the proof of Theorem \ref{thm:P}.
We give a precise proof for completeness.

\medskip

\begin{proofof}{Theorem \ref{thm:Q}}
Fix an ordering $\{E_i\}_{i=1}^d$ of the nontrivial primitive idempotents
such that $E_1=E$, and let $F=E_s$.
For $0 \leq i \leq d$ the scalar $\th^*_i = Q_{i1}$ is the dual eigenvalue
of $E$ for $A_i$, and $P_{si}$ is the eigenvalue of $A_i$ for $F$.
For $0 \leq i \leq d$ define a polynomial $f^*_i \in \R[\lambda]$ by
\[
 f^*_i=(\lambda-\th^*_0)\cdots(\lambda-\th^*_{i-1})
       (\lambda-\th^*_{i+1})\cdots(\lambda-\th^*_d).
\]
For $0 \leq i \leq d$ let $E^*_i$ (resp. $A^*_i$) denote the dual primitive 
idempotent (resp. dual adjacency matrix) corresponding to $A_i$ (resp. $E_i$). 
Let the map $\rho^*$ be as in Definition \ref{def:rhos}. 
By Lemma \ref{lem:Bisym} the matrix $\rho^*(A^*_1)$ is nonnegative,
so we can apply Theorem \ref{thm:mainsym} with $A=\rho^*(A^*_1)$. 
We will do this after a few comments.
Combining Lemma \ref{lem:trans}(ii) and Lemma \ref{lem:rhoAsi}(i) we find
$Q\rho^*(A^*_1)Q^{-1} = \text{diag}(\th^*_0,\th^*_1,\ldots,\th^*_d)$.
Therefore $\rho^*(A^*_1)$ is multiplicity-free if only if 
$\{\th^*_i\}_{i=0}^d$ are mutually distinct, and in this case $\rho^*(E^*_i)$
is the primitive idempotent of $\rho^*(A^*_1)$ for $\th^*_i$ $(0 \leq i \leq d)$.
For $0 \leq i \leq d$ the $(s,0)$-entry of $\rho^*(E^*_i)$ is given in 
Lemma \ref{lem:rhoEi}(ii). 
This entry is $|X|^{-1} P_{si}$ since $Q_{i0}=1$.

(i)$\Rightarrow$(ii):
By Lemma \ref{lem:reltoAsi} the graph $\Gamma(\rho^*(A^*_1))$ is a bidirected
path with endpoints $s$, $0$.
Therefore $A=\rho^*(A^*_1)$ satisfies Theorem \ref{thm:mainsym}(i) with $t=0$. 
Applying Theorem \ref{thm:mainsym} we draw two conclusions.
First, $\rho^*(A^*_1)$ is multiplicity-free, so 
$\{\th^*_i\}_{i=0}^d$ are mutually distinct.
Second, the $(s,0)$-entry of of $\rho^*(E^*_i)$ times $f^*_i(\th^*_i)$ is 
independent of $i$ for $0 \leq i \leq d$.
By this and our above comments, $f^*_i(\th^*_i)P_{si}$ is independent of $i$ 
for $0 \leq i \leq d$.
Therefore  $f^*_i(\th^*_i)P_{si}=f^*_0(\th^*_0)P_{s0}$ for $0 \leq i \leq d$.
By this and since $P_{s0}=1$, we find 
$P_{si}=f^*_0(\th^*_0)f^*_i(\th^*_i)^{-1}$ for $0 \leq i \leq d$.
In other words, for $0 \leq i \leq d$ the eigenvalue of $A_i$ for $F$ 
is equal to \eqref{eq:pid}.

(ii)$\Rightarrow$(i):
Observe that $\rho^*(A^*_1)$ is symmetrizable by Lemma \ref{lem:Bisym}, and
multiplicity-free since $\{\th^*_i\}_{i=0}^d$ are mutually distinct.
For $0 \leq i \leq d$ the eigenvalue of $A_i$ for $F$ is
$P_{si}$, and this is equal to $f^*_0(\th^*_0)f^*_i(\th^*_i)^{-1}$ by 
\eqref{eq:pid}.
By this and our above comments,
for $0 \leq i \leq d$ the $(s,0)$-entry of $\rho^*(E^*_i)$ is equal to 
$|X|^{-1} f^*_0(\th^*_0)f^*_i(\th^*_i)^{-1}$.
So the $(s,0)$-entry of $\rho^*(E^*_i)$ times $f^*_i(\th^*_i)$ is 
independent of $i$ for $0 \leq i \leq d$, and this common value is nonzero.
Therefore $\rho^*(A^*_1)$ satisfies Theorem \ref{thm:mainsym}(ii) with $t=0$.
Now by Theorem \ref{thm:mainsym},
the graph $\Gamma(\rho^*(A^*_1))$ is a bidirected path with endpoints $s$, $0$.
Now by Lemma \ref{lem:reltoAsi} $\cal X$ is $Q$-polynomial relative to $E$, 
and $F=E_s$ is the last primitive idempotent in this $Q$-polynomial structure.
\end{proofof}

\bigskip

{

\small

}

\bigskip\bigskip\noindent
Kazumasa Nomura\\
Professor Emeritus\\
Tokyo Medical and Dental University\\
Kohnodai, Ichikawa, 272-0827 Japan\\
email: knomura@pop11.odn.ne.jp

\bigskip\noindent
Paul Terwilliger\\
Department of Mathematics\\
University of Wisconsin\\
480 Lincoln Drive\\ 
Madison, Wisconsin, 53706 USA\\
email: terwilli@math.wisc.edu

\bigskip\noindent
{\bf Keywords.}
Hessenberg matrix, tridiagonal matrix, association scheme.

\noindent
{\bf 2010 Mathematics Subject Classification}.
05E30, 15A30, 16S50.

\end{document}